
\documentclass[a4paper,12pt]{amsart}
\usepackage{amssymb}

\usepackage[looser]{newtxtext}
\usepackage{newtxmath}
\usepackage[top=35mm, bottom=35mm, left=30mm, right=30mm]{geometry}

\numberwithin{equation}{section}

\pagestyle{plain}
\newtheorem{thm}{Theorem}[section]
\newtheorem{lem}[thm]{Lemma}

\newtheorem{prop}[thm]{Proposition}

\theoremstyle{definition}
\newtheorem{defn}[thm]{Definition}

\newtheorem{ques}[thm]{Question}
\newtheorem{rem}[thm]{Remark}

\newcommand{\bbr}{\mathbb{R}}
\newcommand{\bbn}{\mathbb{N}}

\newcommand{\orb}{\mathrm{Orb}}

\newcommand{\per}{\mathrm{Per}}

\newcommand{\htop}{h_{\mathrm{top}}}

\DeclareMathOperator{\Int}{int}

\usepackage{hyperref}

\begin{document}

\title{Graph maps with zero topological entropy and sequence entropy pairs}

\author[J. Li]{Jian Li}
\address[J. Li]{Department of Mathematics,
	Shantou University, Shantou 515063, Guangdong, China}
\email{lijian09@mail.ustc.edu.cn}

\author[X. Liang]{Xianjuan Liang}
\address[X. Liang]{Department of Mathematics,
	Shantou University, Shantou 515063, Guangdong, China}
\email{liangxianjuan@outlook.com}

\author[P. Oprocha]{Piotr Oprocha}
\address[P. Oprocha]{AGH University of Science and Technology, Faculty of Applied
	Mathematics, al.
	Mickiewicza 30, 30-059 Krak\'ow, Poland
	-- and --
	National Supercomputing Centre IT4Innovations, Division of the University of Ostrava,
	Institute for Research and Applications of Fuzzy Modeling,
	30. dubna 22, 70103 Ostrava,
	Czech Republic}
\email{oprocha@agh.edu.pl}

\subjclass[2010]{37E10, 37B40, 54H15}

\begin{abstract}
We show that graph map with zero topological entropy
is Li-Yorke chaotic if and only if it has an NS-pair (a pair of  non-separable points containing in a same solenoidal $\omega$-limit set), and a non-diagonal pair is an NS-pair 
if and only if it is an IN-pair if and only if it is an IT-pair. This completes characterization of zero topological sequence entropy for graph maps.
\end{abstract}

\keywords{graph map, topological entropy, topological sequence entropy, tameness, Li-Yorke chaos, non-separable points, IN-pair, IT-pair}

\maketitle

\section{Introduction}

In the famous  work \cite{LT75} by Li and Yorke, it is shown that
if a continuous interval map  $f\colon [0,1]\to [0,1]$ has a periodic point with period $3$ then it is Li-Yorke chaotic.
From this result, well-known Sharkovskii's theorem and relations between periodic points and entropy,
it is easy to deduce that if a continuous map  $f\colon [0,1]\to [0,1]$
has positive topological entropy then it is Li-Yorke chaotic.
On the other hand, in 1986, Xiong  \cite{X86} and Sm\'\i{}tal \cite{S86} independently constructed 
interval maps with zero topological entropy which are Li-Yorke chaotic.
Furthermore, in \cite{S86} Sm\'\i{}tal  introduced the concept of non-separable points and showed that any interval map $f$ with zero topological entropy is Li-Yorke chaotic if and only if there is an infinite $\omega$-limit set containing two non-separable points, where
points $x,y$ are separable if there exist two disjoint periodic intervals $I$ and $J$ for $f$ such that $x\in  I$ and $y\in J$ and non-separable otherwise. 
Soon after \cite{S86} was published, two new characterizations of Li-Yorke chaotic interval maps were developed.
In 1989, Kuchta and Sm\'\i{}tal \cite{KS89} showed that an interval map has a  scrambled pair if and only if it is Li-Yorke chaotic, i.e. one scrambled pair indicates that there exists an uncountable scrambled set. Later, in 1991, Franzov\'a and Sm\'\i{}tal \cite{FS91} 
showed that an interval map is Li-Yorke chaotic if and only if it has positive topological sequence entropy. 
We refer the reader to the book \cite{R17} for more details on chaos theory of interval maps and even graph maps,
and to the survey \cite{LY16} for chaos theory of general topological dynamical systems.

Many results for interval maps 
have been extended to circle maps or even graph maps. While it is natural to expect that some characterizations will transfer to this case, usually many new technical problems arise.
A particular example of this kind of difficulty is the condition $J_1\subset f(J_1)$ on closed arc $J_1$ which ensures periodic point on $[0,1]$ but not on the circle.
In 1990, Kuchta \cite{K90} proved that a circle map with zero topological entropy is Li-Yorke chaotic if and only if it has an infinite $\omega$-limit set containing two non-separable points
if and only if it has a scrambled pair. 
In 2000, Hric \cite{H00} proved that a circle map is Li-Yorke chaotic if and only if it has positive
topological sequence entropy. Analogous problems for graph maps were considered in \cite{RS14} by
Ruette and Snoha, who proved that a scrambled pair for a graph map implies that the map is Li-Yorke chaotic. Finally,
Li, Oprocha, Yang and Zeng proved in \cite{LOYZ17} that any continuous map on a topological graph is Li-Yorke chaotic if and only if it has positive topological sequence entropy. 

A closer look at \cite{LOYZ17} shows that there are numerous problems when localizing sequence entropy (in terms of IN-pairs) compared to earlier characterization on interval \cite{L11}.
To solve this problem, we seek for inspiration in mentioned above results on interval maps, and
introduce a concept of non-separable points for graph maps and NS-pair which to some extent mimick dynamical behavior observed in interval maps (a pair of points in a solenoidal $\omega$-limit set with some additional properties; see Section~\ref{sec:NSpairs} for definitions). The first main result of this paper is as follows.  

\begin{thm}\label{thm:chaos-NS-pair}
Let $ f\colon G\to G$ be a graph map with zero topological entropy.
Then $f$ is Li-Yorke chaotic if and only if 
there exists an NS-pair. 	
\end{thm}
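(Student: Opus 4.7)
The plan is to adapt the Sm\'\i{}tal--Kuchta strategy used for interval maps in \cite{S86} and for circle maps in \cite{K90}, exploiting two structural tools that become available when the topological entropy vanishes. First, every infinite $\omega$-limit set of $f$ is solenoidal: it is approximated from above by a decreasing sequence of $f$-cyclic families of periodic subgraphs whose periods tend to infinity. Second, by the Ruette--Snoha theorem \cite{RS14}, Li-Yorke chaos for graph maps is implied by the existence of a single scrambled pair. Thus the theorem reduces to converting between scrambled pairs and NS-pairs against a solenoidal background.

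For the sufficiency direction, I would take an NS-pair $(x,y)\subset \omega(z)$ and show it is scrambled. Since $x$ and $y$ lie in the same solenoidal $\omega$-limit set, at every level of the approximating cyclic family the points sit in the nested periodic subgraphs, and the diameters of these subgraphs shrink to zero. Together with non-separability, this produces a sequence of times at which $f^n(x)$ and $f^n(y)$ fall into a common subgraph of arbitrarily small diameter, giving $\liminf_n d(f^n x, f^n y)=0$. For $\limsup_n d(f^n x, f^n y)>0$, the idea is to argue by contradiction: if the orbits eventually synchronised, then at some sufficiently fine level of the solenoidal approximation one could split the cycle into two disjoint periodic subgraphs separating $x$ from $y$, contradicting non-separability. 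An application of the Ruette--Snoha theorem then upgrades the scrambled pair to full Li-Yorke chaos.

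For the necessity direction, I would start with a scrambled pair $(x,y)$. Zero entropy forces $\omega(x)$ and $\omega(y)$ to be finite or solenoidal; finiteness of either is incompatible with $\limsup_n d(f^n x, f^n y)>0$ together with $\liminf_n d(f^n x, f^n y)=0$, so both sets must be solenoidal. The condition $\liminf_n d(f^n x, f^n y)=0$ further forces $\omega(x)$ and $\omega(y)$ to share a common solenoidal $\omega$-limit set $\Omega$, since otherwise the orbits would eventually be trapped in disjoint cyclic families at some level and could not be proximal. Passing to times $m_k$ with $d(f^{m_k}x, f^{m_k}y)\to \delta>0$, I extract accumulation points $x',y'\in\Omega$ with $x'\neq y'$, and the non-separability of $(x',y')$ follows from the same trap argument: were they separable by disjoint periodic subgraphs, the forward orbits of $x$ and $y$ would eventually track those subgraphs, violating $\liminf_n d(f^n x, f^n y)=0$.

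The main technical obstacle I anticipate is the combinatorial control of periodic subgraphs of a graph, which, unlike periodic arcs on an interval or a circle, can meet at branch points and carry nontrivial loops. The cyclic permutation action of $f$ on these subgraphs, and the way branch points are distributed across the subgraphs at each level of the solenoidal approximation, have to be tracked with care when making the trap arguments precise in both directions. The framework developed in Section~\ref{sec:NSpairs}, together with the known structure theorems for $\omega$-limit sets of zero-entropy graph maps, should supply exactly the book-keeping needed to carry this out.
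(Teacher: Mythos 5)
Your necessity direction is essentially the paper's argument and is sound in outline: one of $\omega_f(x),\omega_f(y)$ is a solenoid (this comes from the proof of the Ruette--Snoha theorem), proximality forces the two orbits eventually into the same connected component of each cycle $X_n$, and limit points taken along times where $d(f^k(x),f^k(y))\geq\delta/2$ give two distinct points in one connected component of $P(x)$, i.e.\ an NS-pair by Lemma~\ref{lem:non-seq-eq}. The sufficiency direction, however, contains a genuine gap. You propose to show that an NS-pair $\langle x,y\rangle$ is itself a scrambled pair, claiming first that the nested periodic subgraphs have diameters shrinking to zero (impossible: each $J_n$ contains both $x$ and $y$, so $\diam J_n\geq d(x,y)>0$; the connected components of $P(z)$ are typically non-degenerate arcs), and second that eventual synchronisation of the orbits would contradict non-separability. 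The latter is false: for zero-entropy graph maps an NS-pair is in general an \emph{asymptotic} pair --- indeed, by Theorem~\ref{thm:IN=IT=NS} together with Lemma~\ref{lem:IN-solenoid}, every NS-pair is asymptotic --- so $\limsup_n d(f^nx,f^ny)$ can be $0$ and no contradiction with non-separability arises. Thus the NS-pair cannot be upgraded to a scrambled pair, and the route through the Ruette--Snoha ``one scrambled pair implies chaos'' theorem collapses for this direction.

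The correct mechanism, which your proposal is missing, is localisation to an interval map rather than producing a scrambled pair directly. Since $G$ has finitely many branch points, some iterate $f^i(J_n)$ of a sufficiently deep periodic subgraph contains no branch point, so $f^{m_n}|_{f^i(J_n)}$ is an interval map; Lemma~\ref{lem:NS-pair-preimage} pulls the NS-pair back to an NS-pair $\langle x^*,y^*\rangle$ for this return map, i.e.\ two non-separable points in an infinite $\omega$-limit set of a zero-entropy interval map. Sm\'\i{}tal's theorem then gives Li-Yorke chaos of $f^{m_n}|_{f^i(J_n)}$, hence of $f$. The chaos here is produced by Sm\'\i{}tal's construction inside the interval, not by the NS-pair being scrambled, and this is precisely the step your trap argument cannot replace.
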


In the framework of so-called ``local entropy theory", 
lots of notions were introduced to describe specific dynamical properties, see \cite{GY09} for a recent survey.
Origins of this approach can be derived from works of Blanchard from 1990s (e.g. see \cite{Blanchard}), who introduced the notion of entropy pair.
It is shown in \cite{Blanchard} that a topological dynamical system has positive topological entropy if and only if it has a non-diagonal entropy pair. 
In \cite{HY06} Huang and Ye characterized an entropy pair by means of an interpolation property on a set of neighborhoods of the pair along a subset of $\bbn$ with positive density.
This concept was later expressed in term of independence sets \cite{KL07} by Kerr and Li, giving raise to non-diagonal IE-pairs characterizing positive topological entropy.
In \cite{KL07} Kerr and Li also introduced the concept of non-diagonal IN-pairs and non-diagonal IT-pairs, 
which characterize positive topological sequence entropy and non-tameness.

In 2011, Li \cite{L11} showed that for an interval map with zero topological entropy, a non-diagonal pair is non-separable if and only if it is an IN-pair if and only if it is an IT-pair. 
In 2017, Li, Oprocha, Yang and Zeng \cite{LOYZ17} showed that a graph map is Li-Yorke chaotic if and only if it has an IN-pair if and only if it has an IT-pair.  It was not known, however, if the sets of these pairs coincide. Motivated by the results obtained in previous years for interval maps,
the authors in \cite{LOYZ17} proposed an open question as follows. 

\begin{ques}\label{que1}
	Let $f\colon G\rightarrow G$ be a graph map with zero topological entropy. Is it true that:
	$\left\langle x, y\right\rangle $ is an IN-pair if and only if $\left\langle x, y\right\rangle $ is an IT-pair?
\end{ques}
Recently in \cite{Yini} Yang proved that this question has a positive answer in the case of circle maps by an elegant application of lifts of circle maps and reduction to the interval case.
By the tools involved, this approach cannot be generalized further to arbitrary graphs, however it gives a hope for a positive answer to the Question above.
Indeed, in this paper, we will completely answer  Question~\ref{que1} proving the following theorem:
\begin{thm}\label{thm:IN=IT=NS}
	Let $f\colon G\to G$ be a graph map with zero topological entropy and $\langle x,y\rangle\in G\times G$ with $x\neq y$.
	Then the following statements are equivalent:
	\begin{enumerate}
		\item $\langle x,y\rangle$ is an NS-pair; \label{Graph:NS}
		\item $\langle x,y\rangle$ is an IN-pair; \label{Graph:IN}
		\item $\langle x,y\rangle$ is an IT-pair.\label{Graph:IT}
	\end{enumerate}
\end{thm}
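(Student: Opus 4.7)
The implication (\ref{Graph:IT}) $\Rightarrow$ (\ref{Graph:IN}) is immediate, since any infinite independence set for the pair $\langle x,y\rangle$ contains finite independence sets of every cardinality. The substantive content of the theorem is therefore (\ref{Graph:NS}) $\Rightarrow$ (\ref{Graph:IT}) and (\ref{Graph:IN}) $\Rightarrow$ (\ref{Graph:NS}), which I would address in turn to close the cycle NS $\Rightarrow$ IT $\Rightarrow$ IN $\Rightarrow$ NS.

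For (\ref{Graph:NS}) $\Rightarrow$ (\ref{Graph:IT}), let $\langle x,y\rangle$ be an NS-pair contained in a solenoidal $\omega$-limit set $\omega(z,f)$. The plan is to exploit the standard description of such an $\omega$-limit set as the nested intersection of a tower of cyclic periodic subgraphs $\{W_i^{(n)}\}_{i=0}^{p_n-1}$ with periods $p_n\to\infty$. Non-separability of $x$ and $y$ says precisely that at every level $n$ the two points lie in a common element $W^{(n)}$ of the cycle. Given disjoint open neighborhoods $U\ni x$ and $V\ni y$, I would construct an infinite independence set $S=\{n_1<n_2<\cdots\}$ inductively: at the $k$-th step, working at a sufficiently fine level and exploiting the density of the orbit of $z$ inside $W^{(n)}$ together with non-separability at that level, one realizes each itinerary $\sigma\in\{U,V\}^k$ along the previously chosen times and then selects $n_{k+1}$ much larger than $p_n$ so as to refresh the pattern. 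This is parallel to the interval argument of Li \cite{L11}, with periodic intervals replaced by periodic subgraphs.

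For (\ref{Graph:IN}) $\Rightarrow$ (\ref{Graph:NS}), assume $\langle x,y\rangle$ is an IN-pair. Standard properties of IN-pairs force both coordinates to be recurrent, and the arbitrarily long finite independence sets preclude one coordinate from being eventually separated from the other in space, yielding $\omega(x,f)=\omega(y,f)$ and that this common $\omega$-limit set is infinite. Zero topological entropy together with the structure theorem for zero-entropy graph maps used in \cite{LOYZ17} then restricts the common $\omega$-limit set to a solenoidal one: any infinite minimal set on a graph not admitting an odometer factor would be tame along some subsequence and therefore admit no IN-pairs. To finish, I would establish non-separability by contradiction: if $x\in P$ and $y\in Q$ for disjoint periodic subgraphs $P,Q$ of common period $p$, then $f^p$ preserves $P$ and $Q$ separately; passing to $f^p$ (which has the same IN-pairs as $f$ on the relevant invariant subsystem) and taking neighborhoods $U\subset P$, $V\subset Q$, the hitting times of the orbit of any point to $U$ and to $V$ would lie in disjoint residue classes modulo $p$, bounding the length of any independence set for $\langle x,y\rangle$ and contradicting the IN-property.

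The principal obstacle is the construction in (\ref{Graph:NS}) $\Rightarrow$ (\ref{Graph:IT}): on a graph the periodic subgraphs $W^{(n)}$ themselves may have branch points, so the combinatorial ``freedom'' needed to realize every binary itinerary along the chosen times within a single level is not as transparent as on the interval or circle. I expect this to be manageable by passing to a high iterate $f^{p_n}$, which acts on each $W^{(n)}$ as a zero-entropy map on a smaller graph and, at sufficiently fine levels, on a branch-free arc, where the one-dimensional arguments of \cite{L11} can be carried out; stitching these local constructions back together along the cyclic orbit of subgraphs is where the graph-theoretic case diverges most from its interval and circle predecessors and will require the most care.
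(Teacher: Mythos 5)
Your reduction of the theorem to NS $\Rightarrow$ IT and IN $\Rightarrow$ NS is the right architecture (IT $\Rightarrow$ IN is indeed trivial), but both substantive implications are left with genuine gaps, and in each case the missing step is exactly the mechanism the paper supplies. For NS $\Rightarrow$ IT, your plan is to run the interval construction of \cite{L11} directly at $\langle x,y\rangle$ inside a branch-free arc ``at sufficiently fine levels''. The obstruction you yourself flag is fatal to that plan as stated: the nested periodic subgraphs $J_n$ containing $x$ and $y$ may all contain a branch point (a single branch point of $G$ can lie in every $J_n$), so there need not be \emph{any} level at which $x,y$ sit in a branch-free component; only some other image $f^i(J_k)$ in the cycle is guaranteed branch-free, and that image does not contain $x,y$. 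The paper's resolution is not a direct itinerary construction but a transfer argument: by Lemma~\ref{lem:NS-pair-preimage} one pulls $x,y$ back to $x^*,y^*\in f^i(J_k)$ with $f^{m_k-i}(x^*)=x$, $f^{m_k-i}(y^*)=y$, so that $\langle x^*,y^*\rangle$ is an NS-pair for the \emph{interval} map $f^{m_k}|_{f^i(J_k)}$; Theorem~\ref{thm:interval-IN-IT} makes it an IT-pair there; and Lemma~\ref{lem:IT-IN-fk} (IT-pairs are the same for $f$ and $f^{m_k}$, and $IT(X,f)$ is $f\times f$-invariant) pushes it forward to $\langle x,y\rangle$. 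Without this preimage-plus-push-forward step, ``stitching these local constructions back together'' is precisely the unproved part, and the direct construction you sketch (realizing all binary itineraries using density of the orbit of $z$) is not justified for zero-entropy maps even on the interval without reproving \cite{L11}.

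For IN $\Rightarrow$ NS, the key claims -- that an IN-pair is asymptotic (or at least that $\omega_f(x)=\omega_f(y)$), that this forces a solenoid, and that $x,y$ themselves lie in a common solenoidal $\omega$-limit set -- are asserted rather than proved, and the supporting reasoning is off: tameness governs IT-pairs while nullness governs IN-pairs, so ``tame along some subsequence and therefore admit no IN-pairs'' conflates the two, and ruling out the circumferential (irrational-rotation-like) case requires an argument, not a remark about odometer factors. The paper does this via Lemma~\ref{lem:IN-omega} ($x,y\in\omega(f)$) and Proposition~\ref{prop:LOZ}: the factor $(Y,g)$ has no scrambled pairs, hence is null by Theorem~\ref{thm:graph-tame-null-eq}, so the projected IN-pair is diagonal; since non-degenerate fibers are obtained by collapsing connected components of solenoids, $\langle x,y\rangle$ is asymptotic and $\omega_f(x)$ is a solenoid, after which both points are placed in one nested tower of periodic subgraphs. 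Your alternative ending -- contradiction with separability via residue classes -- is also not tight: if $x$ or $y$ lies on the boundary of its periodic subgraph you cannot choose $U\subset I$, $V\subset J$, and the correct obstruction is not ``disjoint residue classes'' per se but the combination of the all-ones pattern (forcing the independence set into one class mod the period of $I$) with a mixed pattern (then landing in $I\cap J=\emptyset$); moreover an NS-pair requires more than non-separability -- you still must exhibit $x,y$ in a common nested sequence of periodic subgraphs with periods tending to infinity, which your sketch never establishes.
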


The paper is organized as follows.
In Section 2 we recall several useful facts on topological dynamics, with special emphasis on interval maps and graph maps case. 
Section~3 analyses the structure of $\omega$-limit sets in graph maps.
Section~4 is devoted to proof of Theorem~\ref{thm:chaos-NS-pair} and
Theorem~\ref{thm:IN=IT=NS} is proved in Sections~5.

\section{Preliminaries}

\subsection{Topological dynamics}
By a \emph{topological dynamical system}, we mean a pair $(X,f)$,
where $X$ is a compact metric space with a metric $d$ and $f\colon X\to X$ is a continuous map.
A point $x\in X$ is \emph{periodic} (for $f$) with period $n$ 
if $f^n(x)=x$ and $f^i(x)\neq x$ for
$1\leq i<n$. We denote by $\per(f)$ the set of all periodic points of $f$.
For a point $x\in X$, the \emph{orbit} of $x$ is defined as $\orb_f(x)=\{f^n(x)\colon x\in\bbn_0\}$,
and the \emph{$\omega$-limit set} of $x$ is  
\[
\omega_f(x)=\bigcap_{n=1}^\infty \overline{\{f^m(x)\colon m\geq n\}}.
\]
We also write 
\[
\omega(f)=\bigcup_{x\in X}\omega_f(x).
\]
It is well known that for every $k\in\bbn$ and $x\in X$ we have  
\[
\omega_{f}(x)=\bigcup_{i=0}^{k-1}\omega_{f^k}(f^i(x))
\]
and therefore $\omega(f)=\omega(f^k)$.
A pair $\langle x,y\rangle \in X\times X$ of points in $X$ is \emph{asymptotic} if 
\[
\lim_{n\to\infty} d(f^n(x),f^n(y))=0,
\]
\emph{proximal} if it satisfies
\[
\liminf_{n\to\infty} d(f^n(x),f^n(y))=0,
\]
and \emph{scrambled} if it is proximal but not asymptotic.
A dynamical system $(X,f)$ is \emph{Li-Yorke chaotic}
if there exists an uncountable subset $S$ of $X$ such that any two distinct points $x,y\in S$ form a scrambled pair.
It follows by results of \cite{RS14} that if a graph map has at least one scrambled pair then it is Li-Yorke chaotic.

Let $(X,f)$ and $(Y,g)$ be two dynamical system. 
If there exists a continuous surjective map $\pi\colon X\to Y$ such that 
$\pi(f(x))=g(\pi(x))$ for any $x\in X$, then we say that $(X,f)$ and $(Y,g)$ are \emph{semi-conjugate}, or $(Y,g)$ is a \emph{factor} of $(X,f)$,
and $\pi$ is a \emph{semi-conjugacy} or a \emph{factor map}.

The \emph{topological entropy} of $(X,f)$  and the \emph{sequence topological entropy} of $(X,f)$ along an infinite subset $A$ of $\bbn$
are denoted by $\htop(X,f)$ and $\htop^A(X,f)$ respectively. We refer the reader to the textbook \cite{W82} for basic results on entropy theory.
We say that the dynamical system $(X,f)$ is \emph{null} if $\htop^A(X,f)=0$ for any infinite subset $A$ of $\bbn$.

The enveloping semigroup $\mathcal{E}(X, f ) $ of a dynamical system $(X,f)$ is defined as the closure of 
$\{f^n\colon n\in\bbn_0\}$ in the product space $X^X$ endowed with the topology of pointwise convergence. 
A dynamical system $(X, f)$ is \emph{tame} if the cardinal number 
of its enveloping semigroup is not greater than the cardinal number of $\bbr$. The idea of tameness was introduced by K\"ohler in \cite{K95}. Here we adopt the approach of Glasner \cite{G06} (see also \cite{KL07}).

\begin{defn}
	Let $(X,f)$ be a topological dynamical system and 
	$ A_1, A_2,\cdots, A_k \subset X$.
	We call $I\subset\bbn$ 
	\emph{an independence set} of $\left\lbrace A_1, A_2,\cdots, A_k \right\rbrace$ if for any non-empty finite subset $J$ of $I$ and any $S\in \{1,2,\dotsc,k\}^J$ we have
	$\bigcap\nolimits_{i\in J}f^{-i}A_{S(i)}\neq \emptyset$.
\end{defn}

\begin{defn}
	Let $(X,f)$ be a topological dynamical system.
	A pair $\left\langle x, y\right\rangle  \in X\times X$ is called an \emph{IN-pair} (resp.\ an \emph{IT-pair}; \emph{IE-pair} ) if for any neighborhoods $U_1$ and $U_2$
	of $x$ and $y$ respectively, $\left\lbrace U_1, U_2 \right\rbrace $ has arbitrarily large finite independence set 
	(resp.\ $\left\lbrace U_1, U_2 \right\rbrace $ has an infinite  independence set; resp. independence set of positive upper density).
	Denote the set of IN-pairs and IT-pairs of $(X,f)$ by
	$IN(X,f)$ and $IT(X,f)$ respectively.
\end{defn}

It is clear from the definition that every IT-pair is also an IN-pair. The following theorem shows that the converse is not true (as long as we remember, that there are tame systems which are not null). The following two results were proved in \cite{KL07} (cf. also earlier \cite{WH} and the notion of weak scrambled pair).
For completeness, let us also recall, that existence of non-diagonal IE-pairs is equivalent to positive topological entropy and that every IE-pair is an IT-pair but not conversely (see \cite{KL07} for more details; cf. \cite{GY09}).

\begin{thm}[]\label{thm:null-IN}
Let $(X,f)$ be a topological dynamical system.
Then 
\begin{enumerate}
	\item $(X,f)$ is null if and only 
	if every IN-pair must be in the diagonal of $X\times X$;
	\item $(X,f)$ is tame if and only if every IT-pair must be in the diagonal of $X\times X$.	
\end{enumerate}	
\end{thm}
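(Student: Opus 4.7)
The plan is to prove both equivalences in parallel, since the IN/null and IT/tame dichotomies share a common skeleton: on the one side, a non-diagonal pair yields disjoint open witnesses whose combinatorial independence structure directly produces entropy or a large enveloping semigroup; on the other side, one extracts a non-diagonal limit pair from the combinatorial witnesses supplied by failure of nullness or tameness.

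\medskip

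\noindent\textbf{Step 1: From non-diagonal pair to failure of null/tame.} Suppose $\langle x,y\rangle$ is a non-diagonal IN-pair. Pick disjoint neighborhoods $U_1\ni x$, $U_2\ni y$ with $\overline{U_1}\cap\overline{U_2}=\emptyset$, and let $\mathcal{U}=\{V_1,V_2\}$ where $V_i=X\setminus\overline{U_{3-i}}$, so $\mathcal{U}$ is an open cover. For each $n$, fix a finite independence set $F_n$ for $\{U_1,U_2\}$ of size at least $n$. Arrange the $F_n$'s into a single increasing sequence $A\subset\bbn$ so that a fixed positive fraction of the initial segment $A\cap[1,\max F_n]$ lies inside $F_n$. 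By definition of independence, for any $S\colon F_n\to\{1,2\}$ there exists $z_S\in\bigcap_{i\in F_n}f^{-i}U_{S(i)}$, and the orbits of the $2^{|F_n|}$ points produce $2^{|F_n|}$ distinct $\mathcal{U}$-names along $A\cap[1,\max F_n]$; this gives $h_{\mathrm{top}}^A(X,f)\geq(\log 2)\cdot\liminf|F_n|/|A\cap[1,\max F_n]|>0$, contradicting nullness. The IT-case is identical except that the single infinite independence set $I$ is already provided, and one takes $A=I$.

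\medskip

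\noindent\textbf{Step 2: From failure of null/tame to non-diagonal pair.} For the IN/null half, assume $h_{\mathrm{top}}^A(X,f)>0$ for some infinite $A$. Choose an open cover $\{W_1,\dots,W_k\}$ witnessing positive sequence entropy and, using subadditivity of the combinatorial entropy of covers together with the standard reduction to two-element covers, extract two open sets $U_1,U_2$ with disjoint closures whose pair $\{U_1,U_2\}$ admits arbitrarily large finite independence sets inside $A$. For each $n$ pick such an $F_n$ and a witness $z_n$ realising the ``alternating'' pattern, i.e.\ $f^i(z_n)\in U_1$ for half of $i\in F_n$ and $f^i(z_n)\in U_2$ for the rest. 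By a diagonal-extraction/compactness argument in $X\times X$ we obtain cluster points $x\in\overline{U_1}$, $y\in\overline{U_2}$, and every pair of neighborhoods of $x,y$ inherits arbitrarily large independence sets from the $F_n$'s; hence $\langle x,y\rangle$ is a non-diagonal IN-pair. For the IT/tame half, the hypothesis that $|\mathcal{E}(X,f)|>2^{\aleph_0}$ must be converted into an infinite independence set: this is exactly the Rosenthal--Bourgain--Fremlin--Talagrand dichotomy applied to the family $\{1_{U}\circ f^n\}_{n\in\bbn}$ for a suitable $U$, producing an $\ell^1$-sequence and hence an infinite independence set for $\{U_1,U_2\}$; the same compactness extraction then gives a non-diagonal IT-pair.

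\medskip

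\noindent\textbf{Main obstacle.} The delicate part is Step 2. In the IN-direction, the reduction from an arbitrary cover witnessing positive sequence entropy to a two-element cover $\{U_1,U_2\}$ whose independence sets live inside the original $A$ requires careful counting, not just existence of many names; one must keep track that the independence sets one extracts stay within $A$ (otherwise the resulting pair may be IN only along a different sequence, which is harmless, but the quantitative bookkeeping is nontrivial). In the IT-direction, the genuine depth lies in invoking the Rosenthal-type dichotomy from \cite{KL07} to upgrade a merely uncountable enveloping semigroup into an honest infinite independence set; this is the analytic input without which the combinatorial side of the theorem cannot be reached.
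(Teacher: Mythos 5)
Note first that the paper does not prove this theorem at all: it is quoted from Kerr and Li \cite{KL07}, so your proposal has to stand on its own against that proof, whose overall architecture you correctly reproduce. There are, however, two genuine gaps. The first is in Step 1, in the IT case: taking $A=I$ and running the same name-counting argument only shows that the system is not null, and non-nullness does not contradict tameness --- nullness implies tameness (since $IT\subset IN$), not conversely, and as the paper itself recalls there are tame systems that are not null. So your one-line reduction proves the wrong thing. To get non-tameness from a non-diagonal IT-pair you must use the infinite independence set for $\{U_1,U_2\}$ to produce an independent, hence $\ell^1$-, sequence of functions separating $\overline{U_1}$ from $\overline{U_2}$, and then apply the Rosenthal/Bourgain--Fremlin--Talagrand machinery to conclude that the pointwise closure of $\{f^n\}$ in $X^X$ contains non-Baire-class-1 limits and has cardinality $2^{2^{\aleph_0}}$, exceeding that of $\bbr$; the analytic input you defer to Step 2 is indispensable in this direction as well.

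The second gap is that in Step 2 the combinatorial heart of the theorem is asserted rather than proved. For the IN/null half, the passage from ``exponentially many $\mathcal{U}$-names along finite subsets of $A$'' to ``a disjoint pair $\{U_1,U_2\}$ with independence sets of size proportional to those subsets'' is precisely the Karpovsky--Milman (Sauer--Shelah-type) counting lemma of \cite{KL07}, and ``careful counting plus the standard reduction to two-element covers'' does not supply it. The same omission undermines your localization: the claim that ``every pair of neighborhoods of $x,y$ inherits arbitrarily large independence sets from the $F_n$'s'' is false as a formal consequence of compactness, because shrinking $U_1,U_2$ to smaller neighborhoods requires heredity of independence under finite partitions --- one covers $\overline{U_1}$ and $\overline{U_2}$ by finitely many small closed pieces, uses the combinatorial lemma again as a pigeonhole to find a pair of pieces retaining large independence sets, and nests; the cluster points of your ``alternating-pattern'' witnesses $z_n$ play no role and do not by themselves carry the independence down to small neighborhoods. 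The analogous localization for IT-pairs (from an infinite independence set for $\{U_1,U_2\}$ to a single pair of points all of whose neighborhoods admit infinite independence sets) is likewise nontrivial in \cite{KL07} and is not addressed by your diagonal extraction.
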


\begin{lem}[\cite{KL07}]\label{INIT-conjugate}
	Let $\pi\colon (X,f)\to (Y,g)$ be a factor map between two topological dynamical systems.     
	Then $\pi\times \pi (IN(X,f))=IN(Y,g)$ 
	and $\pi\times \pi (IT(X,f))=IT(Y,g)$.
\end{lem}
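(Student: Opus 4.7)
The lemma splits into two inclusions for each of IN and IT; the IT arguments run along the same lines, so I focus on IN.

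For the easy inclusion $\pi\times\pi(IN(X,f))\subseteq IN(Y,g)$, take $\langle x_1,x_2\rangle\in IN(X,f)$ and set $y_i=\pi(x_i)$. Given open $V_1\ni y_1$, $V_2\ni y_2$ in $Y$, the preimages $U_i=\pi^{-1}(V_i)$ are open neighborhoods of $x_i$ in $X$, so for every $n$ there exists an independence set $I$ of $\{U_1,U_2\}$ with $|I|\geq n$. Using $\pi\circ f=g\circ\pi$ and surjectivity of $\pi$, for every finite $J\subseteq I$ and every $S\colon J\to\{1,2\}$ the image under $\pi$ of any point of $\bigcap_{i\in J}f^{-i}(U_{S(i)})$ lies in $\bigcap_{i\in J}g^{-i}(V_{S(i)})$, so the same $I$ witnesses independence of $\{V_1,V_2\}$ in $Y$. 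Hence $\langle y_1,y_2\rangle\in IN(Y,g)$; the IT version is identical with ``arbitrarily large finite'' replaced by ``infinite''.

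For the harder inclusion $IN(Y,g)\subseteq\pi\times\pi(IN(X,f))$, fix $\langle y_1,y_2\rangle\in IN(Y,g)$ and work inside the non-empty compact fiber $K=\pi^{-1}(y_1)\times\pi^{-1}(y_2)\subseteq X\times X$. Call a closed non-empty $B\subseteq K$ \emph{admissible} if for every open $U_1,U_2\subseteq X$ with $B\subseteq U_1\times U_2$ and every $n$, the pair $\{U_1,U_2\}$ has an independence set of size at least $n$. The whole fiber $K$ is admissible: given $U_i\supseteq\pi^{-1}(y_i)$, closedness of $\pi$ (a continuous map between compacta) produces open $V_i=Y\setminus\pi(X\setminus U_i)\ni y_i$ with $\pi^{-1}(V_i)\subseteq U_i$, and an independence set of $\{V_1,V_2\}$ in $Y$ supplied by the IN-pair $\langle y_1,y_2\rangle$ lifts to one of $\{\pi^{-1}(V_1),\pi^{-1}(V_2)\}$, and a fortiori of the larger pair $\{U_1,U_2\}$. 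Any descending chain of admissible subsets is bounded below by its compact non-empty intersection, which is still admissible since any open rectangle containing the intersection already contains some member of the chain by compactness. Zorn's Lemma therefore furnishes a minimal admissible $B_0\subseteq K$.

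The crux is that $B_0$ must be a singleton; the unique $\langle x_1,x_2\rangle\in B_0$ then lies in $\pi^{-1}(y_1)\times\pi^{-1}(y_2)$ and is an IN-pair of $(X,f)$, directly by admissibility applied to boxes $U_1\times U_2\ni\langle x_1,x_2\rangle$. Suppose for contradiction that $B_0$ has two distinct points; after projecting to a coordinate we may assume distinct first coordinates $a_1\neq b_1\in\pi_1(B_0)$. Choose disjoint open $A\ni a_1$, $C\ni b_1$ with $\overline{A}\cap\overline{C}=\emptyset$ and split $B_0$ into the two proper closed non-empty subsets $B_0^A=B_0\cap(\overline{A}^{c}\times X)$ and $B_0^C=B_0\cap(\overline{C}^{c}\times X)$, whose union is $B_0$. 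By minimality of $B_0$ neither $B_0^A$ nor $B_0^C$ is admissible, so there exist open rectangles $U_1^A\times U_2^A\supseteq B_0^A$ and $U_1^C\times U_2^C\supseteq B_0^C$ and an integer $N$ such that neither $\{U_1^A,U_2^A\}$ nor $\{U_1^C,U_2^C\}$ has an independence set of size $N$. The enlarged rectangle $(U_1^A\cup U_1^C)\times(U_2^A\cup U_2^C)$ still contains $B_0$, so admissibility of $B_0$ supplies independence sets of $\{U_1^A\cup U_1^C,U_2^A\cup U_2^C\}$ of arbitrarily large size. The main obstacle, and the combinatorial heart of the argument, is a Ramsey-type extraction: given such an independence set $I$ of size $M$ chosen large compared to $N$, for each $S\colon I\to\{1,2\}$ pick a witness $x^S$ and label each $i\in I$ by which of the two rectangles ($A$ or $C$) captures $f^i(x^S)$; a Sauer--Shelah-style application to the resulting family $\{\psi^S\}_{S}$ valued in the four-letter alphabet $\{1,2\}\times\{A,C\}$ fully shatters a subset $J\subseteq I$ of size $\geq N$, so that every $T\colon J\to\{1,2\}$ is realized by some $S$ whose rectangle-labels are constantly $A$ (or, independently, constantly $C$) on $J$. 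This produces an independence set of size $\geq N$ for either $\{U_1^A,U_2^A\}$ or $\{U_1^C,U_2^C\}$, contradicting the choice of $N$.

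The IT statement is proved with the same minimality scheme, ``arbitrarily large finite'' replaced by ``infinite'' throughout; the combinatorial extraction simplifies, since an infinite independence set of the union-pair admits, by an infinite Ramsey argument applied to the $\{A,C\}$-coloring of the witness orbits, an infinite sub-independence set of one of $\{U_1^A,U_2^A\}$ or $\{U_1^C,U_2^C\}$, again contradicting minimality.
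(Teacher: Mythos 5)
The paper does not actually prove this lemma --- it is quoted from Kerr--Li \cite{KL07} (Propositions 5.4 and 6.6 there), and your proposal reconstructs the general shape of their argument. Your easy inclusion is correct, and so is your Zorn/compactness framework: the verification that the fiber product $K$ is admissible (lifting neighborhoods of $y_i$ via closedness of $\pi$) and that admissibility passes to nested intersections are both fine, as is the final observation that a singleton minimal admissible set is the desired IN-pair.

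The gap is in the splitting step, and it is genuine. A minor point first: $B_0\cap(\overline{A}^{c}\times X)$ is relatively \emph{open} in $B_0$, not closed, so minimality does not apply to it as written; you should remove the open sets $A$ and $C$ rather than their closures. The serious problem is the combinatorial extraction. From the fact that $\{U_1^A\cup U_1^C,\ U_2^A\cup U_2^C\}$ has arbitrarily large independence sets, the union-splitting lemma only yields that \emph{one of the four} pairs $\{U_1^{\alpha},U_2^{\beta}\}$ with $\alpha,\beta\in\{A,C\}$ has arbitrarily large independence sets, and the mixed pairs $\{U_1^A,U_2^C\}$, $\{U_1^C,U_2^A\}$ contradict nothing in your setup. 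Your Sauer--Shelah step, which would force the $\{A,C\}$-labels to be \emph{constant} on $J$ uniformly over all $T\colon J\to\{1,2\}$, is false: the refined family has only $2^{|I|}$ members in a four-letter alphabet, and nothing prevents the label from being a function of the value --- e.g.\ every witness could place value $1$ in $U_1^A$ and value $2$ in $U_2^C$, in which case only a mixed pair is independent while both diagonal pairs fail already at size one. The same objection defeats the Ramsey argument sketched for the IT case. The standard repair, which is what Kerr--Li do, is to run the Zorn argument over closed \emph{products} $Z_1\times Z_2\subseteq\pi^{-1}(y_1)\times\pi^{-1}(y_2)$, to split only one coordinate at a time, and --- crucially --- to intersect the neighborhoods of the unsplit coordinate: if $U_1'\times U_2'$ and $U_1''\times U_2''$ witness non-admissibility of $Z_1'\times Z_2^0$ and $Z_1''\times Z_2^0$, one applies the one-coordinate union-splitting lemma to $\{U_1'\cup U_1'',\ U_2'\cap U_2''\}$, so that either outcome is dominated by one of the two bad rectangles and the contradiction is genuine. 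Your argument needs to be restructured along these lines before it closes.
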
 

The following result is folklore, see e.g. \cite{TYZ10} or \cite{Yini}.
\begin{lem} \label{lem:IT-IN-fk}
Let $(X,f)$ be a topological dynamical system. 
Then 
\begin{enumerate}
	\item both $IT(X, f )$ and $IN(X, f )$ are closed and 
	$f \times f$-invariant subsets of $X\times X$; 
	\item for every $k\in\bbn$, $IN(X,f)=IN(X,f^k)$ and  $IT(X,f)=IT(X,f^k)$.
\end{enumerate}
\end{lem}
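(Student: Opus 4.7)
My plan is to prove (1) by direct verification from the definitions, and (2) by a pigeonhole argument on residue classes modulo $k$.

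For (1), closedness follows from a standard approximation: given $\langle x_n, y_n\rangle \to \langle x, y\rangle$ with each pair IN (resp.\ IT) and open neighborhoods $U_1, U_2$ of $x, y$, for all sufficiently large $n$ the $U_i$ are also neighborhoods of $x_n, y_n$, so an independence set witnessing the property for $\langle x_n, y_n\rangle$ directly witnesses it for $\langle x, y\rangle$. For $f\times f$-invariance, if $U_1, U_2$ are open neighborhoods of $f(x), f(y)$, then by continuity $f^{-1}(U_1), f^{-1}(U_2)$ are open neighborhoods of $x, y$; any independence set $I$ for the pullbacks yields $I+1$ as an independence set of the same cardinality for $\{U_1, U_2\}$, via the identity $f^{-i}\circ f^{-1}=f^{-(i+1)}$.

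For (2), the inclusion $IN(X,f^k)\subseteq IN(X,f)$ and its IT-analogue form the easier direction: if $I$ is an independence set for $\{A_1, A_2\}$ under $f^k$, then $kI = \{ki : i\in I\}$ is an independence set of equal cardinality under $f$, since every finite $J\subseteq kI$ equals $kL$ for some $L\subseteq I$, and
\[
\bigcap_{j\in J} f^{-j}(A_{S(j)}) \;=\; \bigcap_{n\in L} (f^k)^{-n}(A_{S(kn)}) \;\neq\; \emptyset.
\]
For the reverse inclusion I would use pigeonhole. Given open neighborhoods $U_1, U_2$ of $x, y$ and a target size $N$, take an $f$-independence set of size at least $Nk$ (or an infinite one in the IT case); some residue class $r$ modulo $k$ contains at least $N$ (resp.\ infinitely many) of its elements, say of the form $kn_1+r,\ldots,kn_N+r$. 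For every choice $S'\colon\{n_1,\ldots,n_N\}\to\{1,2\}$, the identity
\[
f^{-r}\Bigl(\bigcap_j f^{-kn_j}(U_{S'(n_j)})\Bigr) \;=\; \bigcap_j f^{-(kn_j+r)}(U_{S'(n_j)}) \;\neq\; \emptyset
\]
forces $\bigcap_j (f^k)^{-n_j}(U_{S'(n_j)}) \neq \emptyset$, exhibiting the desired $f^k$-independence set of size $N$ (resp.\ an infinite one).

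I do not anticipate any genuine obstacle; the only point requiring care is that producing an $f^k$-independence set of size $N$ consumes a factor of $k$ in the starting $f$-independence set, but this is harmless because both the IN condition (\emph{arbitrarily large}) and the IT condition (\emph{infinite}) absorb this loss.
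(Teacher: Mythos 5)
Your proof is correct. Note that the paper does not actually prove this lemma: it is quoted as folklore with references to \cite{TYZ10} and \cite{Yini}, and your direct verification (shifting independence sets by $1$ for invariance, multiplying by $k$ for $IN(X,f^k)\subseteq IN(X,f)$, and pigeonholing on residues mod $k$ with the identity $f^{-(kn+r)}=f^{-r}\circ (f^k)^{-n}$ for the converse) is precisely the standard argument those sources rely on. The only cosmetic point is that if the convention $I\subseteq\bbn$ excludes $0$, an element of the chosen residue class with $n_j=0$ should be discarded, which costs at most one element and is absorbed exactly as you say by ``arbitrarily large'' and ``infinite''.
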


\subsection{Interval maps}
Let $I\subset \bbr$ be a non-degenerate bounded closed interval and let $f\colon I\to I$ be a continuous map.
In this case, we say that $f$ is an interval map and usually use the map $f$ to represent the dynamical system $(I,f)$.

The following result first appeared in \cite{Sh66}, see also in \cite{Block96}.
\begin{thm}\label{thm:interval limit set}
	Let $f\colon I\to I$ be an interval map.
	\begin{enumerate}
		\item If $\omega_1$ and $\omega_2$ are two $\omega$-limit sets and $a \in \omega_1\cap \omega_2$ is a limit point from the left (resp., from the right) of both $\omega_1$ and $\omega_2$, then $\omega_1\cup \omega_2$ is also an $\omega$-limit set of $f$;
		\item If $\omega_1\subset \omega_2 \subset \cdots$ is a sequence of $\omega$-limit sets, then the closure of their union is also an $\omega$-limit set of $f$.
	\end{enumerate}	
\end{thm}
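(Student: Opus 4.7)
The plan is as follows. For both parts, I would construct a point $z \in I$ with the desired $\omega$-limit set via a nested preimage construction, producing $z = \bigcap_k J_k$ for a decreasing sequence of compact intervals chosen so that $f^{t_k}(J_k)$ is progressively close to each point of a dense enumeration of the target set $W$. The orbit of $z$ then visits a dense subset of $W$, which gives $W \subseteq \omega_f(z)$; a parallel requirement that the intermediate iterates $f^m(J_k)$ remain close to $W$ for $t_k \leq m \leq t_{k+1}$ gives $\omega_f(z) \subseteq W$, so that $\omega_f(z) = W$.

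For part (2), once we enumerate a dense sequence $(p_k)$ in $W := \overline{\bigcup_n \omega_n}$ so that each $p_k \in \omega_{m(k)}$ with $m(k) \to \infty$, the construction is straightforward: a point $x_k$ with $\omega_f(x_k) = \omega_{m(k)}$ has orbit visiting every neighborhood of $p_k$ infinitely often, so continuity of $f^{t_k}$ lets us trap a small preimage of a neighborhood of $p_k$ inside $J_{k-1}$. The monotonicity $\omega_n \subseteq \omega_{n+1}$ is exactly what ensures that the targets at successive stages can be reached without disturbing the previously achieved visits to $W$, since a point whose orbit shadows $\omega_{m(k+1)}$ is automatically close to all of $\omega_{m(k)}$.

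For part (1), the nested construction is analogous with $W = \omega_1 \cup \omega_2$, but the passage from targeting a point of $\omega_1$ at one stage to targeting a point of $\omega_2$ at the next stage (or vice versa) requires a switching step, and this is where the one-sided hypothesis on $a$ enters. Since $a$ is a left limit point of both $\omega_1$ and $\omega_2$, any $x_i$ with $\omega_f(x_i) = \omega_i$ has orbit visiting arbitrarily small left-neighborhoods of $a$; choosing matched visits $f^{N_1}(x_1), f^{N_2}(x_2) \in (a - \eps, a)$ and exploiting that $f^{N_1}$ sends intervals to intervals, one finds a point $z'$ close to $x_1$ whose $N_1$-th iterate coincides with $f^{N_2}(x_2)$, after which the orbit of $z'$ shadows that of $x_2$ toward $\omega_2$.

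The main obstacle I anticipate is precisely this switching step: one must arrange for $f^{N_1}(J_k)$ to be an interval large enough to contain $f^{N_2}(x_2)$ while $J_k$ itself remains small enough to preserve the earlier inductive constraints. This tension is resolved by iterating the one-sided approach, replacing $N_1$ with still-later visits where $f^{N_1}(x_1)$ is much closer to $a$, so that by the intermediate value theorem the image $f^{N_1}(J_k)$ sweeps across a wider left-sided neighborhood of $a$ even as $\diam(J_k)$ shrinks. If the hypothesis were weakened to allow $a$ to be, say, a left limit point of $\omega_1$ but a right limit point of $\omega_2$, then the preimage of $f^{N_2}(x_2)$ inside $J_k$ could end up on the wrong side of $x_1$ and the switch would fail; this is precisely why the one-sided agreement in the statement is essential.
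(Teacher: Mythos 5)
The paper does not prove this theorem at all: it is quoted from \cite{Sh66} (see also \cite{Block96}), so your attempt can only be measured against the classical proofs, which rest on considerably more machinery than the nested-interval scheme you outline. Your skeleton (nested closed intervals $J_k$, times $t_k$, a dense sequence of targets in $W$, plus control of the intermediate images to get both inclusions $W\subseteq\omega_f(z)$ and $\omega_f(z)\subseteq W$) is indeed the standard way to realize a prescribed set as an $\omega$-limit set \emph{once the induction can be continued}, and you correctly locate the crux in the switching step. But the way you resolve it is not a proof. In part (1), the claim that by taking later visit times $N_1$ the image $f^{N_1}(J_k)$ ``sweeps across a wider left-sided neighborhood of $a$'' has no justification: $f^{N_1}(J_k)$ is only known to be an interval containing $f^{N_1}(x_1)$, and under a general continuous map it can shrink as $N_1$ grows --- it may even degenerate to the single orbit point, since $f$ may collapse intervals --- in which case it never captures any point of $\orb_f(x_2)$, no matter how close both orbits come to $a$ from the left. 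The one-sided hypothesis by itself does not force any expansion of images of small intervals; Sharkovskii's argument uses it in a genuinely different way (through covering relations between intervals with endpoints in $\omega_1\cup\omega_2$ and the strong invariance of $\omega$-limit sets), and that is precisely the substantial content of the theorem rather than a routine detail one can defer.

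Part (2) contains the same gap, but there you assume it away: monotonicity gives that the orbit of $x_{m(k+1)}$ returns arbitrarily close to every point of $\omega_{m(k)}$, hence close to the interval $f^{t_k}(J_k)$ you have built, but ``close to'' is not ``inside''. The interval $f^{t_k}(J_k)$ contains a point of the orbit you have been tracking, whose $\omega$-limit set may be exactly $\omega_{m(k)}$; if its forward images collapse, every point of $J_k$ inherits the $\omega$-limit set $\omega_{m(k)}$ and the construction stalls, while the theorem still asserts that $\overline{\bigcup_n\omega_n}$ is an $\omega$-limit set of some point that must be produced by a finer argument. So the inclusion $\omega_n\subseteq\omega_{n+1}$ does not make the transition between generators automatic; part (2) is in fact the deeper statement (it is the chain case of the closedness of the family of $\omega$-limit sets established in \cite{Block96}). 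In summary, the proposal identifies the right difficulty but does not overcome it in either part, and the steps where it matters are exactly the ones left unjustified.
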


Let $f\colon I\to I$ be a continuous map.
We say that two points $x,y\in I$ are \emph{separable} if 
there exist two disjoint non-degenerate closed subintervals $I_1$ and $I_2$ of $I$ and $k_1,k_2\in\bbn$ such that
$I_i,f(I_i),\dotsc,f^{k_i-1}(I_i)$ are pairwise disjoint and $f^{k_i}(I_i)  =I_i$ for $i=1,2,$
 $x\in  I_1$ and $y\in I_2$. Otherwise we say that they are \emph{non-separable}.

In what follows, we will refer to the following two results on interval maps.
\begin{thm}[{\cite[Theorem 2.2]{S86}}]
Let $f\colon I\to I$ be an interval map with zero topological entropy.
Then $f$ is Li-Yorke chaotic if and only if there exists an infinite $\omega$-limit set containing two non-separable points.
\end{thm}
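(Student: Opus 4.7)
The plan is to exploit Sharkovskii's classification of zero-entropy interval maps: $f$ has only periodic points whose period is a power of $2$, and any infinite $\omega$-limit set is solenoidal, i.e.\ contained in a nested intersection $Q=\bigcap_{n}\bigcup_{i=0}^{2^n-1}J_n^{(i)}$, where $\mathcal{J}_n=\{J_n^{(0)},\dots,J_n^{(2^n-1)}\}$ is a cycle of pairwise disjoint closed intervals cyclically permuted by $f$ and $\mathcal{J}_{n+1}$ subdivides $\mathcal{J}_n$ two-to-one. The first step is to record this structural fact, which is standard for zero-entropy interval maps, and to rephrase non-separability in its language: two points $x,y$ are non-separable if and only if for every $n$ they lie in the same component of $\mathcal{J}_n$.

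For sufficiency, let $\omega$ be an infinite $\omega$-limit set containing two distinct non-separable points $x,y$. Then $\omega$ is solenoidal with $\omega\subseteq Q$, and $\omega\cap J_n^{(i)}$ is infinite whenever non-empty. Because $x\neq y$ yet they sit in the same component of every $\mathcal{J}_n$, the set of levels $n$ at which their common component of $\mathcal{J}_n$ separates them into different children in $\mathcal{J}_{n+1}$ must be infinite. Exploiting this alternation between ``together'' and ``split'' levels, I would build a Cantor scheme inside $\omega$ whose branches produce an uncountable set $S\subseteq\omega$ such that every pair in $S$ is simultaneously proximal (sharing a common $J_n^{(i)}$ infinitely often, yielding arbitrarily small iterate-distances) and non-asymptotic (separated by the gap between distinct components of some $\mathcal{J}_{n+1}$ infinitely often). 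Thus $S$ is scrambled and $f$ is Li-Yorke chaotic.

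For necessity, assume $f$ is Li-Yorke chaotic. By Sharkovskii, $f$ must be of type $2^\infty$ (finite types $2^n$ admit only periodic $\omega$-limit sets, hence no scrambled pair), so the cycles $\mathcal{J}_n$ exist for all $n$. Take a scrambled pair $\langle x,y\rangle$. Proximality forces, for every $n$, the existence of arbitrarily large $k$ with $f^k(x)$ and $f^k(y)$ in a common element of $\mathcal{J}_n$, while non-asymptoticity forces at least one of $\omega_f(x)$, $\omega_f(y)$ to be infinite. A diagonal extraction yields a limit pair $\langle x',y'\rangle\in\omega_f(x)\times\omega_f(y)$ which is non-separable in the sense above. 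Theorem~\ref{thm:interval limit set} is then applied -- either via a common one-sided accumulation point (part (1)) or via a nested union (part (2)) -- to place both $x'$ and $y'$ inside a single infinite $\omega$-limit set of $f$.

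The main obstacle is the necessity direction: guaranteeing that the non-separable pair we extract from a scrambled pair lies inside one common infinite $\omega$-limit set, rather than being split between $\omega_f(x)$ and $\omega_f(y)$. Verifying the hypotheses of Theorem~\ref{thm:interval limit set} in the solenoidal setting -- in particular producing the required left/right accumulation or explicit nested sequence of $\omega$-limit sets -- is the most delicate step, since distinct solenoidal minimal sets can in principle meet only on their boundaries, and the merging argument must be made robust against this.
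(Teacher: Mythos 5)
The paper does not prove this statement; it is imported from Sm\'\i{}tal \cite{S86}, so your sketch has to stand on its own. The necessity half is on the right track and parallels the route the present paper takes for the forward implication of Theorem~\ref{thm:chaos-NS-pair}: from a scrambled pair one gets a solenoidal $\omega$-limit set, proximality forces the two orbits into the same component of each level eventually, choosing times of definite separation and passing to a limit yields two distinct non-separable points, and Theorem~\ref{thm:interval limit set} (plus maximality of $\omega$-limit sets) is what places both limit points inside one infinite $\omega$-limit set. The details you leave out (the diagonal extraction, the verification of the one-sided accumulation hypothesis) are real but fillable.

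The genuine gap is in the sufficiency direction, where the proposed mechanism is internally inconsistent. First, two non-separable points of the solenoidal set are \emph{never} ``separated into different children'': lying in different components of some $\mathcal{J}_{n+1}$ is exactly a witness of separability, so the set of ``split levels'' you want to be infinite is in fact empty, and the nested components containing $x$ and $y$ intersect to a non-degenerate component of $Q$ of length at least $d(x,y)$. Second, no scrambled pair can ever be ``separated by the gap between distinct components of some $\mathcal{J}_{n+1}$'': since $f$ permutes the finitely many pairwise disjoint closed intervals of $\mathcal{J}_{n+1}$ cyclically, once two iterates lie in different components they lie in different components at every later time, so their distance is bounded below forever by the minimal gap of that level and the pair cannot be proximal. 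Hence every pair in a scrambled set must share the same component of every $\mathcal{J}_n$ at every time, and the required $\limsup$-separation has to be produced \emph{inside} a single non-degenerate component of $Q$ --- typically by constructing points whose orbits shadow the two far-apart non-separable points $x$ and $y$ at prescribed times, using that the generating orbit approaches both --- which is precisely the delicate core of Sm\'\i{}tal's argument (or, if one only wants a single scrambled pair, one could then invoke \cite{KS89}). ``Build a Cantor scheme exploiting the alternation of together and split levels'' does not supply this, because the alternation it relies on does not exist; until the separation mechanism is replaced, the sufficiency direction does not go through.
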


\begin{thm}[{\cite[Theorem 2.7]{L11}}] \label{thm:interval-IN-IT}
Let $f\colon I\to I$ be an interval map with zero topological entropy and $x,y\in I$ with $x\neq y$. Then the following are equivalent:
\begin{enumerate}
	\item $x,y$ are non-separable and contained in an infinite $\omega$-limit set;
	\item $\langle x,y\rangle $ is an IN-pair;
	\item $\langle x,y\rangle $ is an IT-pair.
\end{enumerate}	
\end{thm}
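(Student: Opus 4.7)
\textbf{Proof plan for Theorem~\ref{thm:IN=IT=NS}.} The strategy is to close the cycle \eqref{Graph:IT} $\Rightarrow$ \eqref{Graph:IN} $\Rightarrow$ \eqref{Graph:NS} $\Rightarrow$ \eqref{Graph:IT}. The first implication is immediate from definitions, since any infinite independence set produces finite ones of arbitrary size. The heart of the argument lies in the remaining two implications, both of which rest on the structural analysis of $\omega$-limit sets of zero-entropy graph maps developed in Section~3, together with the interval case Theorem~\ref{thm:interval-IN-IT} and the basic functorial properties of IN/IT-pairs.

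For \eqref{Graph:IN} $\Rightarrow$ \eqref{Graph:NS} I would argue by contraposition. Suppose $\langle x,y\rangle$ with $x\neq y$ fails to be an NS-pair. The structure results of Section~3 should provide, for every solenoidal $\omega$-limit set $\omega$ of $f$, a nested sequence $\mathcal{G}_1\supset\mathcal{G}_2\supset\cdots$ of periodic subgraphs with periods $p_n\to\infty$ whose forward orbits cover $\omega$, together with a canonical factor map $\pi$ onto an odometer $Z$ collapsing the pieces of the finest level of the decomposition. Failure of the NS-pair condition then splits into two cases: either (i) $x$ and $y$ do not lie jointly in any solenoidal $\omega$-limit set, in which case one can separate them by iterate-wise disjoint neighborhoods that preclude finite independence sets beyond some length; or (ii) they lie in a common $\omega$ but $\pi(x)\neq\pi(y)$. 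In case (ii), Lemma~\ref{INIT-conjugate} would make $\langle\pi(x),\pi(y)\rangle$ a non-diagonal IN-pair in the odometer, but odometers are equicontinuous, hence null, so by Theorem~\ref{thm:null-IN}(1) they admit only diagonal IN-pairs, a contradiction.

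For \eqref{Graph:NS} $\Rightarrow$ \eqref{Graph:IT} let $\langle x,y\rangle$ be an NS-pair and fix neighborhoods $U_1,U_2$ of $x,y$; I aim to produce an infinite independence set for $\{U_1,U_2\}$. The plan is to exploit the nested periodic subgraph decomposition witnessing the NS-property: at level $n$ a connected periodic subgraph $G_n$ of period $p_n$ contains both $x$ and $y$, and $f^{p_n}|_{G_n}$ is again a zero-entropy graph map carrying the NS-structure on a smaller scale. Using Lemma~\ref{lem:IT-IN-fk} to identify IT-pairs of $f$ with those of each iterate $f^{p_n}$, and Lemma~\ref{INIT-conjugate} to transport along natural factor maps, I would either reduce at some level to an interval-like branch in which Theorem~\ref{thm:interval-IN-IT} supplies the desired infinite independence set, or build it recursively: at stage $N$, the periodicity of $\mathcal{G}_N$ together with non-separability of $\langle x,y\rangle$ within it permits realizing every prescribed pattern $S\in\{1,2\}^J$ of length $|J|\le N$ as a nonempty intersection $\bigcap_{i\in J}f^{-i}U_{S(i)}$, and a diagonal extraction yields an infinite independence set.

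The main obstacle will be the \eqref{Graph:IN} $\Rightarrow$ \eqref{Graph:NS} direction. The IN hypothesis is strictly weaker than IT and provides only \emph{finite} independence sets of arbitrary size, yet we must force $x,y$ into a single fiber of the odometer factor of a specific solenoidal $\omega$-limit set and verify they actually belong to such an $\omega$-limit set in the first place. Extracting a quantitative contradiction in case (ii) requires choosing the scale of truncation of the $\mathcal{G}_n$ so that the equicontinuity gap of the odometer dominates the diameter of the pieces, and case (i) will require a careful rigidity argument ruling out long independence sets when $x,y$ lie in dynamically separated invariant regions. A secondary technical point will be justifying, in the \eqref{Graph:NS} $\Rightarrow$ \eqref{Graph:IT} direction, that the recursive construction produces arbitrarily long patterns without the available periodic branches shrinking so fast as to render the preimages of $U_1,U_2$ empty.
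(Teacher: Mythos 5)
Your proposal does not prove the statement it was meant to prove. The statement at hand is Theorem~\ref{thm:interval-IN-IT}, a result about \emph{interval} maps quoted from \cite{L11}; in the present paper it is imported as a known ingredient and no proof of it is given. What you have written is a plan for the graph-map Theorem~\ref{thm:IN=IT=NS}, and, crucially, your plan explicitly invokes Theorem~\ref{thm:interval-IN-IT} (``the interval case'') as a black box inside the implication \eqref{Graph:NS}$\Rightarrow$\eqref{Graph:IT}. Read as an argument for Theorem~\ref{thm:interval-IN-IT} itself, this is circular: you assume the conclusion in order to derive it. A genuine proof of the interval statement must work directly with the structure of infinite $\omega$-limit sets of zero-entropy interval maps --- the nested sequence of periodic intervals with periods $2^n$ associated to such an $\omega$-limit set --- and construct the independence sets for $\{U_1,U_2\}$ from the way the images of these intervals sweep across a component containing both non-separable points; none of that appears in your sketch.

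Even judged as a plan for Theorem~\ref{thm:IN=IT=NS}, the direction \eqref{Graph:IN}$\Rightarrow$\eqref{Graph:NS} diverges from what actually works and has a gap. The paper does not pass to an odometer factor of a single solenoid; it uses the global factor map of Proposition~\ref{prop:LOZ} onto a graph map without scrambled pairs, which is null by Theorem~\ref{thm:graph-tame-null-eq}, so Lemma~\ref{INIT-conjugate} forces $\pi(x)=\pi(y)$; hence $\langle x,y\rangle$ is asymptotic and $\omega_f(x)$ is a solenoid (Lemma~\ref{lem:IN-solenoid}), and the NS-property follows from the maximal solenoidal $\omega$-limit set containing $x$. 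Your case (i) --- the pair not lying jointly in any solenoidal $\omega$-limit set --- is exactly the hard point, and ``separate them by iterate-wise disjoint neighborhoods'' is not an argument: a priori an IN-pair could sit astride two distinct invariant regions, and ruling this out is precisely what Proposition~\ref{prop:LOZ} accomplishes. Your \eqref{Graph:NS}$\Rightarrow$\eqref{Graph:IT} direction is in the right spirit (reduce to an interval piece of a deep periodic subgraph), but it needs the preimage statement of Lemma~\ref{lem:NS-pair-preimage} to produce an NS-pair \emph{for the restricted interval map}, after which Theorem~\ref{thm:interval-IN-IT} and Lemma~\ref{lem:IT-IN-fk} finish; the recursive/diagonal construction you describe instead is not needed and is not substantiated.
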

 
\subsection{Graph maps}
Recall that a topological space $X$ is a \emph{continuum} if it is a compact, connected metric
space. An \emph{arc} is a continuum homeomorphic to the closed interval $[0,1]$. 
A \emph{(topological) graph} is a continuum which is a union of finitely many arcs, any
two of which are either disjoint or intersect in at most one common endpoint.
Let $G$ be a graph and $f\colon G\to G$ be a continuous map.
Then $(G,f)$ forms a dynamical system. In this case, we say that $f$ is a graph map and usually use the map $f$ to represent the dynamical system $(G,f)$.

In what follows, we will need the following two results on graph maps. To some extent, these are weaker versions of stated earlier results for interval maps, which is the price paid
for a greater generality of space.

\begin{thm}[{\cite[Theorem 3]{RS14}}]
Let $f\colon G\to G$ be a graph map.
If $f$ has a scrambled pair, then it is Li-Yorke chaotic.	
\end{thm}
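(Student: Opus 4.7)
The plan is to split on the topological entropy of $f$. If $\htop(f)>0$, then by the theorem of Blanchard, Glasner, Kolyada and Maass, $f$ is automatically Li-Yorke chaotic, and the given scrambled pair plays no role. So the substantive case is $\htop(f)=0$, together with a scrambled pair $\langle x,y\rangle$, which I assume from now on.

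First I would locate $\langle x,y\rangle$ inside an $f$-invariant set with useful combinatorial structure. Note that a scrambled pair cannot lie inside a finite invariant set (the distances $d(f^n x, f^n y)$ take only finitely many values there, contradicting simultaneously $\liminf=0$ and $\limsup>0$), so at least one of $\omega_f(x)$, $\omega_f(y)$ is infinite. For graph maps of zero topological entropy, Blokh's structure theorems and their graph-map refinements then give a solenoidal decomposition: a nested sequence of $f$-cyclic families of subgraphs $\mathcal{K}_n=\{K_n^{(0)},\dots,K_n^{(p_n-1)}\}$, cyclically permuted by $f$, with $p_n\mid p_{n+1}$ and $p_n\to\infty$, whose quotient (collapsing each $K_n^{(i)}$ to a point) is an adding machine. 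Proximality of $\langle x,y\rangle$ forces $x$ and $y$ to share the same subgraph address at every level $n$ of the decomposition (otherwise $d(f^k(x),f^k(y))$ would be bounded below by the inter-subgraph distance), while non-asymptoticity forbids $x=y$.

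The main obstacle is to promote this single pair to an uncountable scrambled set. The cleanest route is via Kuratowski--Mycielski. Form the orbit closure $Z=\overline{\orb_{f\times f}(x,y)}\subset G\times G$ and let $X=\pi_1(Z)\cup\pi_2(Z)\subset G$; by proximality $Z$ meets the diagonal, and by non-asymptoticity there is a fixed $\delta>0$ with $Z$ containing pairs at mutual distance at least $\delta$. Using the solenoidal coordinate one verifies that, inside $X\times X$, the set of proximal pairs is a dense $G_\delta$ (from synchronized returns in the adding-machine factor) and that the set of pairs with $\limsup_n d(f^n(\cdot),f^n(\cdot))\geq\delta/2$ is also a dense $G_\delta$ (the minimality of the adding-machine factor spreads the non-asymptotic behavior throughout $X$). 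Since the adding-machine factor is infinite, $X$ has no isolated points, so Mycielski's theorem produces a Cantor set $C\subset X$ in which every two distinct points form a scrambled pair, proving Li-Yorke chaos. The delicate step absent from the interval case is the presence of branch points on $G$: the subgraphs $K_n^{(i)}$ are honest graphs rather than arcs, so distances inside them can evolve non-monotonically and the na\"ive Sm{\'\i}tal construction with disjoint periodic intervals has to be replaced by the more abstract adding-machine plus Mycielski argument sketched above.
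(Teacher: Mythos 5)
Your reduction of the positive-entropy case to the Blanchard--Glasner--Kolyada--Maass theorem is fine, but the zero-entropy half has two genuine gaps. First, note that the paper does not prove this statement at all: it is quoted from Ruette--Snoha \cite{RS14}, whose argument runs through Blokh's trichotomy for infinite $\omega$-limit sets (basic, circumferential, solenoidal). Your sketch silently assumes that zero entropy plus an infinite $\omega$-limit set yields a solenoidal (adding-machine) decomposition with periods $p_n\to\infty$. That is false as stated: zero-entropy graph maps can have circumferential limit sets, whose dynamics are almost conjugate to an irrational rotation (an irrational rotation of the circle itself is the simplest example), and there $\mathcal{C}_P(x)$ is finite. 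To follow your plan one must first show that a scrambled pair cannot occur in that case (proximality pushes the pair into a single fiber of the semi-conjugacy to the rotation, and the pairwise disjointness of the iterated fibers in a finite graph forces the pair to be asymptotic); this case analysis is an essential part of the actual proof and is missing from yours.

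Second, the step that carries all the weight --- promoting one scrambled pair to an uncountable scrambled set --- is only asserted. The two ``dense $G_\delta$'' claims in $X\times X$ do not follow from the properties you invoke: the adding-machine factor is equicontinuous, so all separation of orbits (the $\limsup\geq\delta/2$ behaviour) takes place inside the fibers over it, i.e.\ inside connected components of $P(x)$, and minimality of the factor does not ``spread'' this behaviour densely through $X\times X$; proving such density is precisely the hard content of the theorem, not a routine verification. Moreover, $X=\pi_1(Z)\cup\pi_2(Z)$ need not be perfect: if $x$ (or $y$) is not recurrent it is an isolated point of its orbit closure, so Mycielski's theorem cannot be applied to $X$ as written. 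The known route, which the present paper follows when it reuses this circle of ideas in Theorem~\ref{thm:chaos-NS-pair}, avoids both problems: from the scrambled pair one extracts an NS-pair inside a solenoid, chooses a periodic subgraph $f^i(J_n)$ of large period containing no branch point, so that the return map is an interval map, and then invokes Sm\'\i{}tal's interval results to obtain the uncountable scrambled set, which transfers back to $f$. If you want to keep a Mycielski-style argument you must either supply genuine density proofs inside a suitable perfect invariant subset of the solenoid or fall back on this interval reduction.
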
 

\begin{thm}[{\cite[Theorem 1.5]{LOYZ17}}] \label{thm:graph-tame-null-eq}
Let $f\colon G\to G$ be a graph map. Then the following are equivalent:
\begin{enumerate}
	\item $f$ does not have scramble pairs;
	\item $f$ is tame;
	\item $f$ is null.
\end{enumerate}
\end{thm}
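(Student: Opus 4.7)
The plan is to establish the cycle of implications $(3) \Rightarrow (2) \Rightarrow (1) \Rightarrow (3)$. The implication $(3) \Rightarrow (2)$ is immediate from Theorem~\ref{thm:null-IN}: every IT-pair is an IN-pair, so if no non-diagonal IN-pair exists then certainly no non-diagonal IT-pair exists, giving tameness. This holds for any dynamical system, not just graph maps.

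For $(2) \Rightarrow (1)$, I would argue the contrapositive, splitting on topological entropy. If $\htop(f)>0$, then by the classical Blanchard theorem there exists a non-diagonal IE-pair, which is in particular an IT-pair, violating tameness; moreover any such IE-pair is easily seen to be scrambled, which delivers the scrambled pair directly. The substantive case is $\htop(f)=0$: suppose $\langle x,y\rangle$ is a scrambled pair. Because $f$ has zero entropy, every infinite $\omega$-limit set of $f$ carries a solenoidal structure, namely a nested sequence of periodic sub-graph neighborhoods whose periods tend to infinity and whose diameters shrink to zero, so that $f$ restricted to the $\omega$-limit set factors over an odometer. Proximality forces $x,y$ into a common minimal set of this form, and non-asymptoticity keeps them distinguishable through infinitely many levels of the solenoidal tower. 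One can then extract, from the odometer factor and the pullback of the solenoidal neighborhoods, arbitrarily long independence sets for any pair of neighborhoods of $x$ and $y$, and a standard compactness/diagonalization argument upgrades this to an infinite independence set. Thus $\langle x,y\rangle$ is an IT-pair and tameness fails.

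For $(1) \Rightarrow (3)$, again I would use the contrapositive and split on entropy. Positive entropy again forces scrambled pairs through IE-pairs. If $\htop(f)=0$ but $f$ is not null, then by Theorem~\ref{thm:null-IN} there is a non-diagonal IN-pair $\langle x,y\rangle$. The idea is to promote this IN-pair to a scrambled pair by exploiting the fact that the independence witnesses for neighborhoods $U_1 \ni x$, $U_2 \ni y$ force $f$-orbits visiting $U_1$ and $U_2$ in every combinatorial pattern of arbitrary length. Combined with the solenoidal/periodic decomposition of the $\omega$-limit sets of graph maps with zero entropy, and with the interval-map reference result Theorem~\ref{thm:interval-IN-IT}, one locates the IN-pair inside an infinite $\omega$-limit set and finds periodic sub-graph neighborhoods that alternately collapse the two orbits to near-coincidence and pull them apart, producing simultaneously $\liminf d(f^n x, f^n y)=0$ and $\limsup d(f^n x, f^n y)>0$.

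The main obstacle, and the place where the argument really differs from the interval case, is in both zero-entropy steps: unlike $[0,1]$, a graph has no linear order, so the inductive Smítal-style construction of disjoint periodic sub-intervals does not transfer verbatim. The substitute is a careful analysis of the solenoidal structure of infinite $\omega$-limit sets for zero-entropy graph maps, in particular producing periodic sub-graph neighborhoods that are arcwise connected, pairwise disjoint within each period, and have diameters tending to zero. Verifying this refinement near branch points and essential loops of $G$, and then matching it to the combinatorial structure of independence sets, is the technical core of the proof and where I expect the bulk of the work to lie.
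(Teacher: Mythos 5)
First, note that the paper does not prove this statement at all: it is imported verbatim as \cite[Theorem 1.5]{LOYZ17}, and the machinery developed here (NS-pairs, Theorems~\ref{thm:chaos-NS-pair} and~\ref{thm:IN=IT=NS}) is what would replace the ``technical core'' you defer. Judged on its own, your proposal has a decisive structural flaw in both zero-entropy steps: you try to make one and the same pair simultaneously scrambled and IN/IT. Under zero topological entropy this is impossible for graph maps: by Lemma~\ref{lem:IN-solenoid} every non-diagonal IN-pair (hence every IT-pair) is \emph{asymptotic}, whereas a scrambled pair is by definition proximal but not asymptotic; moreover, by Lemma~\ref{lem:IN-omega} both coordinates of an IN-pair lie in $\omega(f)$, which a scrambled pair need not satisfy. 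So your plan for $(2)\Rightarrow(1)$ (show the given scrambled pair $\langle x,y\rangle$ is an IT-pair) and for $(1)\Rightarrow(3)$ (show the given IN-pair satisfies $\liminf d(f^nx,f^ny)=0$ and $\limsup d(f^nx,f^ny)>0$) both aim at statements that are false. The equivalence is existential, and the correct bridge is a \emph{different} pair: from a scrambled pair one extracts limit points along the times of separation inside a solenoid to obtain an NS-pair (this is the $\Rightarrow$ direction of the proof of Theorem~\ref{thm:chaos-NS-pair}, which uses the fact from \cite{RS14} that one of the two $\omega$-limit sets is a solenoid), and an NS-pair is an IT-pair by pulling it back into an interval restriction $f^{m_k}|_{f^i(J_k)}$ and applying Theorem~\ref{thm:interval-IN-IT} (this is Theorem~\ref{thm:IN=IT=NS}); conversely, a non-diagonal IN-pair is asymptotic with solenoidal $\omega$-limit set, hence an NS-pair, and then Li--Yorke chaos --- in particular the existence of (other) scrambled pairs --- follows via Smítal's interval theorem as in Theorem~\ref{thm:chaos-NS-pair}.

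There are also secondary inaccuracies. (i) It is not true that every infinite $\omega$-limit set of a zero-entropy graph map ``carries a solenoidal structure'' or factors over an odometer: circumferential sets occur, e.g.\ an irrational rotation of the circle has zero entropy and infinite $\omega$-limit sets with no solenoidal tower (cf.\ Theorem~\ref{thm:basic-set-and-rotation}(2) and Lemma~\ref{lem:non-solenoid}); the solenoid only appears for one of the two orbit closures in the scrambled-pair situation, and this requires the argument of \cite{RS14}. (ii) Proximality does not force $x$ and $y$ into a common minimal set. (iii) An IE-pair is not ``easily seen to be scrambled'': in the full shift every non-diagonal pair is an IE-pair, including distal pairs such as the two fixed points. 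That positive entropy yields scrambled pairs is a genuine theorem (Blanchard--Glasner--Kolyada--Maass, or, for graph maps, horseshoes for an iterate); you may invoke it, but not as an immediate consequence of the existence of an IE-pair. With the NS-pair intermediary put in place and the positive-entropy case handled by a proper citation, your cycle of implications $(3)\Rightarrow(2)\Rightarrow(1)\Rightarrow(3)$ would go through, but as written the two zero-entropy steps cannot be repaired without changing the target pair.
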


\section{The structure of $\omega$-limit sets of graph maps}
In this section,
we take a close look at the structure of $\omega$-limit sets for graph maps,
which were studied by Blokh in the series of papers \cite{Blokh82,Blokh,Blokh90a,Blokh90b}
and \cite{Blokh90c}.
Here we will follow the notations from~\cite{RS14}. 

Let $f\colon G\to G$ be a graph map. A subgraph $K$ of $G$
is called \emph{periodic} if
there exists a positive integer $k$ such that $K,f(K),\dotsc,f^{k-1}(K)$
are pairwise disjoint and $f^k(K)=K$.
In such a case, $k$ is called the \emph{period} of $K$, and
$K$ is a \emph{periodic subgraph with period $k$}.
The set $\orb_f(K)=\bigcup_{i=0}^{k-1} f^i(K)$ is called a \emph{cycle}
of graphs with period $k$.

Let $x\in G$. If the $\omega$-limit set $\omega_f(x)$ is infinite, we define 
\begin{equation*}
\mathcal{C}(x):=\{K\subset G\colon K\text{ is a cycle of graphs and }\omega_f(x)\subset K\}.
\end{equation*}
Note that $\bigcap_{n=1}^\infty f^n(G)$ is always a $1$-periodic cycle of graphs contained in $\mathcal{C}(x)$, hence always $\mathcal{C}(x)\neq \emptyset$.
Denote
$$
\mathcal{C}_P(x)=\sup\{ n\in \bbn : \text{ there is a subgraph }K \text{with period }n\text{ and }\orb_f(K)\in \mathcal{C}(x)\}
$$
and observe that $\mathcal{C}_P(x)\in \bbn\cup \{+\infty\}$.

\begin{defn}
	Let $f\colon G\to G$ be a graph map and $x\in G$. If the $\omega$-limit set $\omega_f(x)$ is infinite and $\mathcal{C}_P(x)=+\infty$,
we say that the $\omega$-limit set $\omega_f(x)$  a \emph{solenoid}.
\end{defn}

\begin{lem}[{\cite[Lemma~10]{RS14}}] \label{lem:solenoid}
	Let $f\colon G\to G$ be a graph map. 
	If $\omega_f(x)$ is a solenoid, then
	there exists a sequence of cycles of graphs $(X_n)_{n=1}^\infty$ with periods $(k_n)_{n=1}^\infty$ such that
	\begin{enumerate}
		\item $(k_n)_{n=1}^\infty$ is strictly increasing, $k_1\geq 1$ and $k_{n+1}$ is a multiple of $k_n$ for all $n\geq 1$;
		\item for all $n\geq 1$, $X_{n+1}\subset X_n$ and every connected component of $X_n$
		contains the same number (equal to $k_{n+1}/k_n\geq 2$) of connected components of $X_{n+1}$;
		\item $\omega_f(x)\subset \bigcap_{n\geq 1}X_n$ and $\omega_f(x)$ does not contain periodic points.
	\end{enumerate}
\end{lem}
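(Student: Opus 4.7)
The plan is to construct the sequence $(X_n)$ with periods $(k_n)$ inductively, and then to verify property~(3) from the divisibility structure of the $k_n$'s. For the base case, I would take $X_1=\bigcap_{j\geq 0}f^j(G)$: by compactness and the finite edge structure of $G$ this is a $1$-periodic cycle of graphs, and it clearly contains $\omega_f(x)$, so $X_1\in\mathcal{C}(x)$ with $k_1=1$.

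For the inductive step, assume $X_n$ of period $k_n$ has been built, and pick (using $\mathcal{C}_P(x)=+\infty$) a cycle of graphs $Y\in\mathcal{C}(x)$ of period $m>k_n$. The technical heart of the argument is a \emph{refinement lemma}: the connected components of $X_n\cap Y$ that meet $\omega_f(x)$ form a cycle of subgraphs $X_{n+1}\subset X_n\cap Y$ containing $\omega_f(x)$, whose period $k_{n+1}$ equals $\lcm(k_n,m)$. Because $f(\omega_f(x))=\omega_f(x)$ and $f$ permutes the components of $X_n$ cyclically with period $k_n$ and the components of $Y$ cyclically with period $m$, it also permutes those connected components of $X_n\cap Y$ that meet $\omega_f(x)$. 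Each such component is contained in a unique component of $X_n$ and a unique component of $Y$ by connectedness, and the transitive action of $\langle f\rangle$ on the components of $X_n$ forces the chosen pieces to be uniformly distributed among them; hence every component of $X_n$ contains the same number $k_{n+1}/k_n$ of components of $X_{n+1}$. Since $m>k_n$ we have $k_{n+1}=\lcm(k_n,m)\geq m>k_n$, and since $k_n\mid k_{n+1}$ the integer ratio $k_{n+1}/k_n$ is at least $2$, as required.

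Property~(3) then follows immediately: $\omega_f(x)\subset X_n$ by construction for every $n$, so $\omega_f(x)\subset\bigcap_n X_n$. If some $z\in\omega_f(x)$ were periodic with period $r$, then $\orb_f(z)\subset\omega_f(x)\subset X_n$, and the $k_n$-cyclic action of $f$ on the components of $X_n$ would force $k_n\mid r$ for every $n$, contradicting $k_n\to\infty$.

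The main obstacle is the refinement lemma. The difficulty is that $X_n\cap Y$ may contain parasitic connected components disjoint from $\omega_f(x)$, or behave badly at points where $Y$ exits $X_n$, so that $f$ shuffles such pieces in an uncontrolled way; one must verify that restricting to the components meeting $\omega_f(x)$ yields a set that is still a disjoint union of connected subgraphs of $G$, rather than merely a closed $f$-invariant subset of $X_n\cap Y$. This is where the finite edge combinatorics of $G$, exploited by Blokh in his analysis of graph maps and reused in \cite{RS14}, is genuinely needed; on an arbitrary compact metric space the corresponding statement would fail.
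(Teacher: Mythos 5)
This lemma is not proved in the paper at all: it is quoted verbatim from \cite[Lemma~10]{RS14} (going back to Blokh's spectral decomposition), so any complete self-contained argument would be a welcome addition. Your proposal, however, is not complete: the step you yourself call the ``refinement lemma'' is exactly the content of the lemma, and you state it without proof. Concretely, three of its sub-claims are unjustified. First, you need the connected components of $X_n\cap Y$ meeting $\omega_f(x)$ to form a \emph{single} $f$-cycle: surjectivity of $f|_{\omega_f(x)}$ does give you that $f$ induces a permutation of these finitely many components, but nothing you say rules out that this permutation splits into several disjoint cycles, in which case their union is not a cycle of graphs (a cycle of graphs is by definition the orbit of one periodic subgraph), and you would still have to show that the orbit of a single component already contains $\omega_f(x)$ --- an argument of the type used in Lemma~\ref{lem:solenoid2}(1), which needs $\omega_f(x)$ to meet the interior of that orbit and is not automatic. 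Second, the period of a chosen component need not equal $\lcm(k_n,m)$: $f^{\lcm(k_n,m)}$ returns each component of $X_n$ and of $Y$ to itself but may send a component of $X_n\cap Y$ to a \emph{different} component inside the same intersection piece, so the period is only a multiple of $\lcm(k_n,m)$; this does not wreck the induction, but your ``uniform distribution'' count and the ratio $k_{n+1}/k_n\geq 2$ must then be rederived from the single-cycle structure rather than from the asserted equality. Third, the paper's definition of a periodic subgraph requires $f^{k}(K)=K$ exactly, whereas your construction only yields $f^{p}(J)\subseteq J$ for a component $J$; repairing this (e.g.\ by passing to $\bigcap_{j}f^{jp}(J)$) raises the further issue that the resulting continuum could be degenerate or miss part of $\omega_f(x)\cap J$, and components of $X_n\cap Y$ meeting $\omega_f(x)$ may themselves be single points rather than subgraphs.

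In short, the inductive skeleton and the deduction of item (3) from the divisibility of the periods are fine, but the heart of the lemma --- producing from two cycles in $\mathcal{C}(x)$ a genuinely larger-period cycle in $\mathcal{C}(x)$ nested inside the old one, with the stated component structure --- is precisely what you leave open, and you acknowledge as much. As written, the proposal is a plausible plan plus a citation-shaped hole, not a proof; either carry out the refinement step in detail (handling degenerate components, non-transitive component permutations, and the equality $f^{k_{n+1}}(K)=K$), or do as the paper does and invoke \cite[Lemma~10]{RS14} directly.
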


\begin{lem}\label{lem:solenoid2}
	Let $f\colon G\to G$ be a graph map. 
Assume that $\omega_f(x)$ is a solenoid. Let $(X_n)_{n=1}^\infty$ and $(k_n)_{n=1}^\infty$ be provided by Lemma \ref{lem:solenoid}.
\begin{enumerate}
	\item If $Y$ is a cycle of graphs with $Y\cap \omega_f(x)\neq\emptyset$, then $\omega_f(x)\subset Y$
	and there exists $i\in\bbn$ such that $X_i\subset Y$.
	\item If  $(Y_n)_{n=1}^\infty$ 
	is a sequence of cycles of graphs with periods $(m_n)_{n=1}^\infty$ 
	such that $Y_{n+1}\subset Y_n$, $\omega_f(x)\subset Y_n$ for all $n\in\bbn$ and $\lim_{n\to\infty}m_n=\infty$, then 
	$\bigcap_{n=1}^\infty X_n= \bigcap_{n=1}^\infty Y_n$.
\end{enumerate}
\end{lem}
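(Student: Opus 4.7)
The plan is to prove part (1) directly (it contains the main work) and then to deduce part (2) from it by a symmetry argument. Part (1) itself splits into two steps: first I would show $\omega_f(x)\subset Y$, and then use this to locate some $X_i$ inside~$Y$.

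To prove $\omega_f(x)\subset Y$, write $Y=\bigcup_{j=0}^{m-1}f^j(K)$ with $f^m(K)=K$, so that $f(Y)=Y$ and $Y$ is forward invariant. Pick $y_0\in Y\cap \omega_f(x)$; because $f(\omega_f(x))=\omega_f(x)$, the forward orbit $\{f^n(y_0)\}_{n\geq 0}$ stays inside $Y\cap \omega_f(x)$, and it is infinite since $\omega_f(x)$ contains no periodic point by Lemma~\ref{lem:solenoid}(3). The topological boundary $\partial_G Y$ of the subgraph $Y$ in the graph $G$ is a finite set, so some iterate $y=f^n(y_0)$ lies in $\omega_f(x)\cap \Int_G(Y)$. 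This $y$ admits a $G$-neighborhood $U\subset Y$, and since $y\in \omega_f(x)$ one has $f^{n_k}(x)\in U\subset Y$ for some $n_k$; forward invariance of $Y$ then forces $f^j(x)\in Y$ for all $j\geq n_k$, hence $\omega_f(x)\subset Y$.

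For the existence of $X_i\subset Y$, I would first observe that the cyclic action of $f$ on the $k_n$ components of $X_n$, combined with $f(\omega_f(x))=\omega_f(x)$, forces $\omega_f(x)$ to meet every component of $X_n$. Together with $\omega_f(x)\subset Y$, this gives $C\cap Y\neq\emptyset$ for every component $C$ of $X_n$. If some such $C$ satisfies $C\not\subset Y$, then connectedness of $C$ forces $C\cap \partial_G Y\neq \emptyset$, so the number of components of $X_n$ not contained in $Y$ is at most $|\partial_G Y|$. Choosing $n$ with $k_n>|\partial_G Y|$, at least one component of $X_n$ sits inside $Y$, and then cyclic permutation by $f$ together with $f(Y)=Y$ propagates this to every component of $X_n$, yielding $X_n\subset Y$. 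This finishes part~(1).

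For part (2), the inclusion $\bigcap_n X_n\subset \bigcap_n Y_n$ is immediate from part (1): each $Y_n$ meets $\omega_f(x)$, so some $X_{i(n)}\subset Y_n$, and hence $\bigcap_m X_m\subset Y_n$ for every $n$. The reverse inclusion holds by the same counting and cyclic argument with the roles of the two sequences interchanged: that argument only used that the relevant sequence consists of cycles containing $\omega_f(x)$ with periods tending to infinity, which is exactly the hypothesis on $(Y_j)$. Thus for each $n$, whenever $m_j>|\partial_G X_n|$ one obtains $Y_j\subset X_n$, yielding $\bigcap_j Y_j\subset X_n$ for every $n$. The main obstacle I anticipate is the topological bookkeeping at the boundary $\partial_G Y$ in the graph: verifying its finiteness and then using the no-periodic-points clause of Lemma~\ref{lem:solenoid} to push the orbit of $y_0$ off this finite set; once this is settled, everything else is clean pigeonhole plus the cyclic invariance of cycles of graphs under $f$.
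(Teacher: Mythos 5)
Your proof is correct, and while your first half of part (1) (pushing an infinite orbit point of $\omega_f(x)$ off the finite boundary of $Y$ into its interior and then using invariance of $Y$) coincides with the paper's argument, your mechanism for producing $X_i\subset Y$ is genuinely different. The paper works with $P=\bigcap_{n}X_n$: by Lemma~\ref{lem:solenoid}(2) this set has uncountably many degenerate connected components, all lying in $\omega_f(x)\subset Y$, so since $Y$ has only finitely many components and boundary points, some degenerate component of $P$ lies in $\Int Y$, and the nested components of the $X_m$ shrinking down to that point eventually fall inside $\Int Y$, giving $X_m\subset Y$. You instead run a pigeonhole count: each of the $k_n$ pairwise disjoint components of $X_n$ meets $\omega_f(x)\subset Y$, any component not contained in $Y$ must hit the finite set $\partial_G Y$, so once $k_n>|\partial_G Y|$ some component lies in $Y$, and the cyclic action of $f$ on components together with $f(Y)=Y$ spreads this to all of $X_n$. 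A side benefit of your route is that the symmetry step in part (2) becomes literal rather than loose: your counting argument uses only that the relevant sequence consists of cycles containing $\omega_f(x)$ with periods tending to infinity, hypotheses which $(Y_n)$ satisfies verbatim, whereas the paper's ``switch the roles'' appeal implicitly leans on structural properties of $(X_n)$ supplied by Lemma~\ref{lem:solenoid} (the uncountably many degenerate components of $P$). Both arguments are of comparable length; the paper's exploits more of the solenoid structure, yours only the divergence of the periods, which is exactly what makes it transfer cleanly to $(Y_n)$.
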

\begin{proof}
(1) Pick a point $y\in Y\cap \omega_f(x)$.
By Lemma~\ref{lem:solenoid}(3), the orbit of $y$ is infinite.
As $Y$ has only finitely many boundary points,
there exists $i\in\bbn$ such that $f^i(y)$ is in the interior of $Y$.
It is clear that $f^i(y)\in\omega_f(x)$ and then there exists $j\in\bbn$ such that $f^j(x)$ is in the interior of $Y$. This implies that $\omega_f(x)\subset Y$ as $Y$ is $f$-invariant.

Let $P=\bigcap_{n=1}^\infty X_n$. 
Note that by  Lemma~\ref{lem:solenoid}(2) the set $P$ has uncountably many degenerate connected components and clearly each of them is in $\omega_f(x)$.
As $Y$ has a finite number of connected components and $\omega_f(x)\subset Y$,
there exists a connected component $Y_0$ of $Y$ such that 
$Y_0$ contains uncountably many degenerate connected components of $P$. In particular, the interior of $Y$ contains a degenerate connected component of $Y$.
By the construction of $Y$, there exists $m\in\bbn$
such that there exists a connected component of $X_m$ which is contained in the interior of $Y$. Then $X_m\subset Y$.

(2) By (1), $\bigcap_{n=1}^\infty X_n\subset  \bigcap_{n=1}^\infty Y_n$. 
Fix $n\in\bbn$. By the symmetry,  switching roles of  $(Y_n)_{n=1}^\infty$ and $(X_n)_{n=1}^\infty$ in (1) we obtain that there exists $i\in\bbn$
such that $Y_i\subset X_n$. Therefore, 
$\bigcap_{n=1}^\infty Y_n \subset \bigcap_{n=1}^\infty X_n$.
\end{proof}

\begin{defn}\label{defn:P(x)}
	Let $f\colon G\to G$ be a graph map. 
	Let $\omega_f(x)$ be a solenoid and $(X_n)_{n=1}^\infty$ be provided by Lemma \ref{lem:solenoid}. We denote
	\begin{equation}
	P(x)=\bigcap_{n=1}^{\infty}X_n. \label{def:Px}
	\end{equation}
\end{defn}

\begin{rem}\label{rem:P(x)}
By Lemma~\ref{lem:solenoid2} we clearly have that
\[
P(x)=\bigcap_{K\in \mathcal{C}(x)}K.
\]
This means that definition of $P(x)$ depends only  on $x$ but not on the choice of cycle of graphs $(X_n)_{n=1}^\infty$ provided in Lemma \ref{lem:solenoid}.
\end{rem}

\begin{lem}\label{lem:solenoid cup}
Let $f\colon G\to G$ be a graph map and $x,y\in G$. 
If $\omega_f(x)$ is a solenoid and $\omega_f(x)\cap\omega_f(y)\neq\emptyset$, 
	then $\omega_f(x)\cup \omega_f(y)$ is also a solenoid. 	
\end{lem}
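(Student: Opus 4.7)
The plan is to leverage the nested cycles of graphs from Lemma~\ref{lem:solenoid} for $\omega_f(x)$, push $\omega_f(y)$ into the same cycles, and then collapse the union to a single $\omega$-limit set.

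Fix $(X_n)_{n\geq 1}$ and $(k_n)_{n\geq 1}$ as in Lemma~\ref{lem:solenoid}, and pick $z\in \omega_f(x)\cap \omega_f(y)$. The first step I would carry out is to show that $\omega_f(y)\subset X_n$ for every $n$. Since $\omega_f(x)$ contains no periodic points by Lemma~\ref{lem:solenoid}(3), the orbit of $z$ is infinite, and since $X_n$ has only finitely many boundary points there exists some $i$ with $f^i(z)\in \Int(X_n)$. Forward-invariance of $\omega_f(y)$ gives $f^i(z)\in \omega_f(y)\cap \Int(X_n)$, and by the definition of $\omega_f(y)$ there is some $j$ with $f^j(y)\in \Int(X_n)$; forward-invariance of $X_n$ then propagates this to $f^m(y)\in X_n$ for all $m\geq j$, so $\omega_f(y)\subset X_n$. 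Intersecting over $n$, one obtains $\omega_f(y)\subset P(x)$. This is essentially the same argument already used in the proof of Lemma~\ref{lem:solenoid2}(1), applied to $\omega_f(y)$ in place of a cycle of graphs.

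From here, if we can realize $\omega_f(x)\cup \omega_f(y)$ as $\omega_f(w)$ for some $w\in G$ then we are done: the nested cycles $X_n$ (with $k_n\to \infty$) witness $\mathcal{C}_P(w)=+\infty$, and the set is automatically infinite since it contains $\omega_f(z)\subset \omega_f(x)$, which is infinite because it has no periodic points. Identifying the union with a single $\omega$-limit set is the main obstacle. My strategy would be to prove the apparently stronger statement $\omega_f(x)\subset \omega_f(y)$, so that the union is already $\omega_f(y)$ itself. This would follow from the minimality-type fact that for a solenoid $\omega_f(x)$ every point $z\in \omega_f(x)$ satisfies $\omega_f(z)=\omega_f(x)$: combined with $\omega_f(z)\subset \omega_f(y)$ (which is immediate from $z\in \omega_f(y)$ and the closedness and forward-invariance of $\omega_f(y)$), this yields $\omega_f(x)=\omega_f(z)\subset \omega_f(y)$.

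Establishing this minimality property is the technically delicate step. I would attempt an ``address matching'' argument through the $X_n$: for each $w\in \omega_f(x)$ and each $n$ one finds an iterate of $z$ lying in the same component of $X_n$ as $w$, which is possible because $f$ permutes the $k_n$ components of $X_n$ cyclically and hence the orbits of both $z$ and $w$ visit every component. One then has to show that such iterates of $z$ actually converge to $w$, which amounts to showing that the diameters of the components of $X_n$ meeting $\omega_f(x)$ shrink to zero; this is where Lemma~\ref{lem:solenoid}(2), which forces $k_{n+1}/k_n\geq 2$ and the subdivision of each component of $X_n$ into at least two components of $X_{n+1}$, and Lemma~\ref{lem:solenoid}(3), which excludes periodic points inside $\omega_f(x)$, must be used together to rule out the persistence of nondegenerate invariant pieces inside $P(x)\cap \omega_f(x)$. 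This shrinking step is what I expect to cost the most work in the graph setting, since the components of $X_n$ are subgraphs rather than arcs and diameter control is subtler than in the interval case.
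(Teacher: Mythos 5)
Your opening step (pushing $\omega_f(y)$ into every $X_n$, hence into $P(x)$) is fine and matches the argument of Lemma~\ref{lem:solenoid2}(1). The proof breaks down, however, at the step you yourself flag as delicate, and it cannot be repaired along the lines you propose. Your route requires that for a solenoid $\omega_f(x)$ every $z\in\omega_f(x)$ satisfies $\omega_f(z)=\omega_f(x)$, i.e.\ that solenoidal $\omega$-limit sets are minimal; this is exactly what is \emph{not} available here: $z$ may lie in a proper minimal subset of $\omega_f(x)$, and indeed the whole point of Lemma~\ref{lem:seq-cycle-graphs} (proved from the present lemma) is that $P=\bigcap_n X_n$ may carry several distinct $\omega$-limit sets with one maximal among them. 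Worse, the mechanism you propose for proving it --- that the diameters of the components of $X_n$ meeting $\omega_f(x)$ shrink to zero --- is false in general: it would force every connected component of $P(x)$ meeting $\omega_f(x)$ to be a single point, whereas nondegenerate components of $P(x)$ containing two distinct points of an $\omega$-limit set are precisely what NS-pairs are (Lemma~\ref{lem:non-seq-eq}), what the proof of Theorem~\ref{thm:chaos-NS-pair} produces (two points $x^*,y^*$ with $d(x^*,y^*)\geq\delta/2$ in one component of $P(x)$), and what Proposition~\ref{prop:LOZ} collapses. Since zero-entropy Li--Yorke chaotic interval maps exist (Xiong, Sm\'\i{}tal), your shrinking claim would prove too much: it would rule out NS-pairs altogether. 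Absence of periodic points in $\omega_f(x)$ does not exclude nondegenerate components of $P(x)$, so the ``address matching'' iterates of $z$ only approach the right component of $X_n$, not the point $w$ itself.

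The statement can instead be obtained without any minimality: the paper reduces to the interval. Since $G$ has finitely many branch points, for large $n$ some component $J_n$ of $X_n$ contains no branch point, so $f^{k_n}|_{J_n}$ is an interval map. The set $Q=\omega_f(x)\cap\omega_f(y)$ is a nonempty closed invariant subset of $\omega_f(x)$, hence infinite (no periodic points), so $Q\cap J_n$ is infinite and has a limit point interior to $J_n$; this point is a one-sided limit point of both $\omega_{f^{k_n}}(f^i(x))$ and $\omega_{f^{k_n}}(f^j(y))$ for suitable $i,j$. Theorem~\ref{thm:interval limit set}(1) (Sharkovskii) then says their union is a single $\omega$-limit set of $f^{k_n}|_{J_n}$, and taking the images under $f^m$, $0\leq m<k_n$, shows $\omega_f(x)\cup\omega_f(y)$ is an $\omega$-limit set of $f$; being contained in every $X_n$, it is a solenoid. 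That use of the Sharkovskii union theorem is the key idea missing from your proposal.
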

\begin{proof}
	Let $P=\omega_f(x)\cup \omega_f(y)$ and $Q=\omega_f(x)\cap \omega_f(y)$.
	Let $(X_n)_{n=1}^\infty$ and $(k_n)_{n=1}^\infty$ be provided by Lemma \ref{lem:solenoid} for the solenoid $\omega_f(x)$.
	As $G$ has only finitely many branching points,
	for sufficiently large $n$, there exists a connected component $J_n$ of $X_n$
	such that $J_n$ does not contain branch points of $G$.
	Then $f^{k_n}|_{J_n}$ is an interval map.  
	Note that $Q$ is an $f$-invariant closed set and  $Q\subset \omega_f(x)$ has no periodic points. 
	Then $Q\cap J_n$ is infinite. 
	As $J_n$ has only two boundary points, there are infinite many points of $Q$ in the interior of $J_n$. 
	Therefore, there exist $i,j\in\bbn$ such that $f^i(x)$ and $f^j(y)$ are in the interior of $J_n$. 
	Moreover,   $\omega_{f^{k_n}}(f^{i+m}(x))\subset f^m(J_n)$
	and $\omega_{f^{k_n}}(f^{j+m}(y))\subset f^m(J_n)$  for all $0\leq m<k_n$. 
	It is clear that 
	$$
	\omega_{f^{k_n}}(f^i(x))\cap \omega_{f^{k_n}}(f^j(y))=\omega_f(x)\cap \omega_f(y)\cap J_n=Q\cap J_n
	$$ 
	is an infinite set. 
	Then any limit point of $\omega_{f^{k_n}}(f^i(x))\cap \omega_{f^{k_n}}(f^j(y))$ is a
	limit point from the left or from the right of both $\omega_{f^{k_n}}(f^i(x))$ and $\omega_{f^{k_n}}(f^j(y))$. By Theorem \ref{thm:interval limit set}, $\omega_{f^{k_n}}(f^i(x))\cup \omega_{f^{k_n}}(f^j(y))$ is also an $\omega$-limit set of $f^{k_n}|_{J_n}$. So 
	
	$$
	\bigcup_{m=0}^{k_n-1}f^m(\omega_{f^{k_n}}(f^i(x))\cup \omega_{f^{k_n}}(f^j(y)))
	=\omega_f(x)\cup \omega_f(y)
	=P
	$$
	is an $\omega$-limit set of $f$. 
	For all $n>0$, $P\subset X_n$, then $P$ is a solenoid.  
\end{proof}

Note that the set of all $\omega$-limit sets of a graph map is closed under Hausdorff metric \cite{MS2007}, and so each $\omega$-limit set is contained in a maximal one by Zorn Lemma. 

We will also use the following result about the intersection of a sequence of nested cycles of graphs.
\begin{lem}\label{lem:seq-cycle-graphs}
	Let $f\colon G\to G$ be a graph map. 
Let $(X_n)_{n=1}^\infty$ be a sequence of cycles of graphs  with periods $(k_n)_{n=1}^\infty$ such that $X_{n+1}\subset X_n$ and $\lim_{n\to\infty}k_n=\infty$.
Then there exists a unique maximal $\omega$-limit set $\omega_f(z)$ in $\bigcap_{n=1}^\infty X_n$, and $\omega_f(x)\subset \omega_f(z)$ for all $x\in G$ with $\omega_f(x)\cap \bigcap_{n=1}^\infty X_n\neq\emptyset$.	
\end{lem}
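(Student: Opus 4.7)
The plan is to set up $P := \bigcap_n X_n$ as a dynamical ``trap,'' invoke Zorn's lemma (via the Hausdorff-closedness of the family of $\omega$-limit sets recalled just above the statement) to produce a maximal $\omega$-limit set inside $P$, and close via Lemma~\ref{lem:solenoid cup}; the delicate step is excluding disjoint $\omega$-limit sets inside $P$, which I would reduce to an interval-map statement via Theorem~\ref{thm:interval limit set}.

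First I would record the structure of $P$. As a nested intersection of nonempty $f$-invariant compacts, $P$ is nonempty, compact, and $f$-invariant. It contains no periodic point, because any periodic point of $X_n$ has period a multiple of $k_n$ and $k_n\to\infty$. Next I would prove the \emph{trap property}: if $\omega_f(x)\cap P\neq\emptyset$, then $\omega_f(x)\subset P$ and $\omega_f(x)$ is a solenoid. Pick $p\in\omega_f(x)\cap P$; the orbit of $p$ lies in $P$ and is infinite (as $P$ is periodic-point-free), and since $\partial X_n$ is finite, some iterate $f^j(p)$ lies in $\Int(X_n)$. Because $p\in\omega_f(x)$, continuity yields some $f^N(x)\in\Int(X_n)$, after which $f$-invariance of $X_n$ keeps the orbit inside $X_n$, so $\omega_f(x)\subset X_n$; intersecting over $n$ gives $\omega_f(x)\subset P$, and containment in cycles of periods $k_n\to\infty$ makes $\omega_f(x)$ a solenoid.

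For existence, pick any $z_0\in P$, so that $\omega_f(z_0)\subset P$; Zorn plus Hausdorff-closedness embeds $\omega_f(z_0)$ into a maximal $\omega$-limit set $\omega_f(z)$, and the trap property then forces $\omega_f(z)\subset P$. For containment and uniqueness, consider any $x$ with $\omega_f(x)\cap P\neq\emptyset$: by the trap property $\omega_f(x)\subset P$ is a solenoid, and if $\omega_f(x)\cap\omega_f(z)\neq\emptyset$ then Lemma~\ref{lem:solenoid cup} shows that the union is a solenoid (hence an $\omega$-limit set), so maximality of $\omega_f(z)$ forces $\omega_f(x)\subset\omega_f(z)$; uniqueness of the maximal set follows by applying this symmetrically to any two maximal $\omega$-limit sets in $P$ that meet.

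The main obstacle is thus the disjoint case $\omega_f(x)\cap\omega_f(z)=\emptyset$ inside $P$. I would descend to an interval: since $G$ has only finitely many branch points and $X_n$ has $k_n$ components with $k_n\to\infty$, for large $n$ some component $J_n$ of $X_n$ is a free arc and $g:=f^{k_n}|_{J_n}$ is an interval map. Cyclicity of $f$ on the components of $X_n$ together with the decomposition $\omega_f(w)=\bigcup_{i=0}^{k_n-1}\omega_{f^{k_n}}(f^i(w))$ shows that $\omega_f(x)\cap J_n$ and $\omega_f(z)\cap J_n$ are infinite $\omega$-limit sets of the interval map $g$, both free of $g$-periodic points. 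The plan is to locate a common one-sided accumulation point by iterating the descent: for each $m\geq 1$ every arc sub-component of $X_{n+m}$ lying inside $J_n$ still meets both solenoids by the same cyclicity argument, and using the finite number of branch points together with the growing period $k_{n+m}$ one can force the relevant sub-arcs to shrink inside $J_n$. This produces a common accumulation point of the two sets in $J_n$ which, by arc-order considerations, is approached from the same side by both; then Theorem~\ref{thm:interval limit set}(1) applies, and reassembling the resulting $g$-$\omega$-limit set via the $k_n$ cyclic shifts of $f$ yields an $\omega$-limit set of $f$ strictly containing $\omega_f(z)$, contradicting maximality. The subtlest step to verify rigorously will be this shrinking-arcs descent inside the fixed arc $J_n$.
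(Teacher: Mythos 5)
Your overall skeleton (the ``trap property'', Zorn via the Hausdorff-closedness of the family of $\omega$-limit sets, and absorption via Lemma~\ref{lem:solenoid cup}) is sound and is essentially the paper's route; the trap argument you give is the same device as in the proof of Lemma~\ref{lem:solenoid2}(1). The difference is that the paper never meets your ``disjoint case'': instead of fixing the maximal set in advance, it takes $y\in\omega_f(x)\cap P$, observes that $\omega_f(y)\subset\omega_f(x)\cap P$ is a solenoid, and chooses the maximal $\omega$-limit set $\omega_f(z)$ \emph{containing $\omega_f(y)$}; then $\omega_f(x)\cap\omega_f(z)\supset\omega_f(y)\neq\emptyset$, so Lemma~\ref{lem:solenoid cup} applies immediately and maximality yields $\omega_f(x)\subset\omega_f(z)$ with no case distinction.

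The genuine gap is precisely the step you flag: your treatment of the disjoint case does not go through as sketched. First, nothing in ``finitely many branch points plus $k_{n+m}\to\infty$'' forces the components of $X_{n+m}$ inside $J_n$ to shrink: the components of $P=\bigcap_{n=1}^\infty X_n$ may well be nondegenerate arcs, and along a fixed nested chain the diameters can stay bounded away from $0$. What is actually needed is the existence of \emph{some} nested chain of components collapsing to a point, and that requires a different argument: since the $k_n$ components of $X_n$ are permuted cyclically and each contains the same number $k_{n+1}/k_n$ of components of $X_{n+1}$ (so $k_n\mid k_{n+1}$), the components of $P$ are indexed by an odometer and are uncountable, while a graph contains at most countably many pairwise disjoint nondegenerate subcontinua; hence all but countably many components of $P$ are singletons. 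Combined with the observation (invariance plus cyclic permutation plus compactness) that every $\omega$-limit set contained in $P$ meets every component of $P$, this shows any two $\omega$-limit sets in $P$ share these singleton components, so the disjoint case cannot occur at all — this is also exactly what is needed to make the uniqueness of the maximal set rigorous. Second, your intended endgame is internally off: a ``common (one-sided) accumulation point'' of two disjoint closed sets is impossible, since closedness would place it in both sets; so if your descent produced such a point you would already have contradicted the case hypothesis, and the detour through Theorem~\ref{thm:interval limit set}(1), reassembly, and maximality is superfluous. As written, the proposal leaves its crucial step unproved (and aimed at the wrong mechanism), so it is incomplete; either supply the component/odometer argument above or adopt the paper's choice of maximal set, which bypasses the issue.
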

\begin{proof}
Let $P=\bigcap_{n=1}^\infty X_n$. It is clear that $P$ dose not contain periodic points. 
Let $x\in G$ with $\omega_f(x)\cap P\neq\emptyset$. Then there exists $y\in \omega_f(x)\cap P$. As $\omega_f(x)\cap P$ is an $f$-invariant closed set, $\omega_f(y)\subset \omega_f(x)\cap P\subset \bigcap_{n=1}^\infty X_n$. This implies that $\omega_f(y)$ is a solenoid. Let $\omega_f(z)$ be the maximal $\omega$-limit set with $\omega_f(z)\supset \omega_f(y)$. It is clear that $\omega_f(z)$ is a solenoid too. By Lemma \ref{lem:solenoid cup} $\omega_f(z)\cup \omega_f(x)$ is also an $\omega$-limit set, therefore $\omega_f(x)\subset \omega_f(z)$. Therefore, $\omega_f(z)$ contains all $\omega$-limit sets in $P$, it is the unique maximal $\omega$-limit set in $P$. 
\end{proof}

\begin{defn}
	Let $X$ be a finite union of subgraphs of $G$ such that $f(X)\subset X$. We define
	\begin{equation*}
	E(X,f)=\{y\in X\colon \text{ for any neighborhood } U \text{ of }y \text{ in } X,\;
	\overline{\orb_f(U)}=X\}.
	\end{equation*}
\end{defn}

The following result first appeared in \cite{Blokh90a}, see also in \cite{RS14} for details. It involves a special kind of semi-conjugacy.

\begin{defn}\label{df:ac}
	Let $f\colon X\to X$ and $g\colon Y\to Y$ be two continuous maps and $E\subseteq X$ be a closed invariant set. A continuous surjection $\varphi\colon X\to Y$ is an \emph{almost conjugacy} between $f|_E$ and $g$ if $\phi \circ f=g \circ \phi$ and
	\begin{enumerate}
		\item $\phi(E)=Y,$
		\item $\forall y\in Y, \phi^{-1}(y)$ is connected,
		\item $\forall y\in Y, \phi^{-1}(y)\cap E=\partial \phi^{-1}(y)$, where $\partial A$ denotes the boundary of $A$.
	\end{enumerate}
\end{defn}

\begin{lem}\label{lem:non-solenoid}
	Let $f\colon G\to G$ be a graph map. 
	If an infinite $\omega$-limit set $\omega_f(x)$ is not a solenoid,
	then there exists a cycle of graphs $X\in\mathcal{C}(x)$ such that
	\begin{enumerate}
		\item for any $Y\in \mathcal{C}(x)$, $X\subset Y$;
		\item the period of $X$ is maximal among the periods of all cycles in $\mathcal{C}(x)$.
	\end{enumerate}
	If we put $E=E(X,f)$ then:
	\begin{enumerate}
		\item $\omega_f(x)\subset E\subset X$;
		\item $E$ is a perfect set and $f|_E$ is transitive;
		\item there exits a transitive map $g\colon Y\to Y$,
		where $Y$ is a finite union of graphs, and a semi-conjugacy $\varphi\colon X\to Y$
		between $f|_X$ and $g$ which almost conjugates $f|_E$ and $g$.
	\end{enumerate}
\end{lem}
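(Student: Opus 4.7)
Since $\omega_f(x)$ is not a solenoid, $N:=\mathcal{C}_P(x)$ is a finite positive integer, so I may choose any cycle $X\in\mathcal{C}(x)$ of period exactly $N$; this settles conclusion (2) of Part~(1). For conclusion (1), I begin with the observation that $\omega_f(x)$ meets every connected component of $X$, because $f(\omega_f(x))=\omega_f(x)$ and $f$ cyclically permutes these components. Given another $Y\in\mathcal{C}(x)$ of period $m$, pick a connected component $C$ of $X\cap Y$ meeting $\omega_f(x)$; the $f$-orbit of $C$ (taking at each step the connected component of the appropriate $X_i\cap Y_j$ into which $f$ sends $C$) returns to $C$ after a period divisible by both $N$ and $m$, hence at least $\lcm(N,m)$, and so by maximality equal to $N$. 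This forces $m\mid N$, and a component-counting argument then yields $X\subset Y$ and uniqueness of $X$.

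Next, set $E:=E(X,f)$. The inclusion $E\subset X$ is immediate. To show $\omega_f(x)\subset E$, I argue by contradiction: if some $y\in\omega_f(x)$ had a neighborhood $U$ in $X$ with $X':=\overline{\orb_f(U)}\subsetneq X$, then $X'$ would be a closed $f$-invariant proper subset of $X$ containing $\omega_f(x)$; the cyclic permutation structure on the components of $X$ would then allow me to extract from $X'$ a cycle of graphs in $\mathcal{C}(x)$ of period strictly greater than $N$, contradicting maximality. Transitivity of $f|_E$ is immediate from the defining property of $E$, and perfectness of $E$ follows: an isolated point in $E$ would force $E$ to be finite by transitivity, contradicting that $\omega_f(x)\subset E$ is infinite.

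For the factor, I collapse each connected component of $X\setminus E$ to a single point, producing a continuous surjection $\varphi\colon X\to Y$ that commutes with the dynamics and induces a continuous map $g\colon Y\to Y$. Granted that $Y$ is a finite union of topological graphs, transitivity of $g$ is inherited from $f|_E$, and the three almost-conjugacy conditions of Definition~\ref{df:ac} follow from the construction: $\varphi(E)=Y$ because $E$ meets every fibre; fibres are connected by construction; and $\varphi^{-1}(y)\cap E=\partial\varphi^{-1}(y)$ since the collapsed components are open subsets of $X\setminus E$ with boundary in $E$.

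The principal obstacle is verifying that the quotient space $Y$ is indeed a finite union of topological graphs. This requires delicate topological book-keeping, using that $G$ has only finitely many branch points, that $E$ is perfect and $f$-invariant, and that each complementary component of $E$ in $X$ is open, connected, and has boundary contained in $E$. Only finitely many such components can carry nontrivial topology (for instance contain branch points of $G$ or form a loop), so the quotient remains a finite union of topological graphs. This is the step where the existing proofs in the literature concentrate the bulk of the technical work.
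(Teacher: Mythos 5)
Your proof of Part (1) starts from a wrong selection of $X$. Choosing an \emph{arbitrary} cycle of maximal period $N=\mathcal{C}_P(x)$ does not give conclusion (1): already on the interval one can have two invariant (period-$1$) subintervals $I_1,I_2$, neither contained in the other, both containing an infinite non-solenoidal $\omega$-limit set (take $f$ with a transitive invariant interval strictly inside $[0,1]$ and extend so that both $[0,a]$ and $[b,1]$ are invariant); then $N=1$ and your chosen $X$ may fail to be contained in some $Y\in\mathcal{C}(x)$. Your intersection argument, even granted in full, only yields a cycle $Z\subset X\cap Y$ of period $N$ lying in $\mathcal{C}(x)$; it does not yield $X\subset Y$. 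What the lemma asserts is the existence of the \emph{minimal} element of $\mathcal{C}(x)$, and to get it you would need, e.g., to show $\mathcal{C}(x)$ is stable under the intersection construction and under intersections of decreasing chains, apply Zorn's lemma to get a minimal $X$, and only then does ``$Z\subset X$, $Z\in\mathcal{C}(x)$, hence $Z=X\subset Y$'' close the argument. Moreover the object you build inside $X\cap Y$ is not verified to be a cycle of graphs at all: components of $X\cap Y$ may be degenerate, and the component cycle only satisfies $f^r(C)\subset C$, not $f^r(C)=C$, so invoking ``maximality of the period'' is premature.

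The second half is largely asserted rather than proved, and these are precisely the hard parts of Blokh's spectral decomposition (note the paper itself does not prove this lemma; it quotes it from \cite{Blokh90a} and \cite{RS14}). Your contradiction argument for $\omega_f(x)\subset E$ presupposes that $\overline{\orb_f(U)}$ contains $\omega_f(x)$, which is unjustified: the orbit of $x$ need not enter $X$, and the claim that $\omega_f(x)\subset\overline{\orb_f(U)}$ for every relative neighborhood $U$ of every $y\in\omega_f(x)$ is exactly the statement $\omega_f(x)\subset E$ being proved; in addition $\overline{\orb_f(U)}$ can be a proper invariant subset of the \emph{same} period $N$, so the contradiction cannot come from ``period strictly greater than $N$'' but only from minimality of $X$, which returns you to the gap above. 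Likewise ``transitivity of $f|_E$ is immediate from the defining property of $E$'' is not: the definition only says relative neighborhoods of points of $E$ have orbit closure equal to $X$; it gives neither $f(E)\subset E$ nor a dense orbit within $E$, and this transitivity is a substantive theorem. Finally, for the almost conjugacy you explicitly defer the crux (that the monotone quotient $Y$ is a finite union of graphs, that $g$ is transitive, and that the three conditions of Definition~\ref{df:ac} hold) to ``the literature.'' As it stands the proposal is not a proof; the appropriate course, as in the paper, is to cite \cite{Blokh90a} and \cite{RS14}, or else to supply the full decomposition arguments from those sources.
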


\begin{defn}
	Let $f\colon G\to G$ be a graph map. 
	Assume that $\omega_f(x)$ is an infinite  set but not a solenoid.
	Let $X$ be the minimal (in the sense of inclusion) cycle of graphs contained $\omega_f(x)$ and denote $E=E(X,f)$.
	We say that $E$ is a \emph{basic set} if $X$ contains a periodic point, and \emph{circumferential set} otherwise.
\end{defn}

The following theorem first appeared in \cite{Blokh}, here the statement  is a combination of Theorems 20 and 22 in \cite{RS14}.

\begin{thm} \label{thm:basic-set-and-rotation}
	Let $f\colon G\to G$ be a graph map.
	\begin{enumerate}
		\item If $f$ admits a basic set, then the topological entropy of $f$ is positive.
		\item If $f$ is transitive with no periodic points then
		it is conjugate to an irrational rotation of the circle.
	\end{enumerate}
\end{thm}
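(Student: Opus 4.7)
The plan is to prove the two parts separately: Part (1) by extracting a horseshoe from the basic set structure, and Part (2) by reducing $G$ to the circle and invoking Poincar\'e's rotation theory. For part (1), let $E = E(X,f)$ be a basic set contained in a cycle of graphs $X$ of period $k$; by definition $X$ contains a periodic point $p$. First I would pass to $g := f^k|_{X_0}$, where $X_0$ is the connected component of $X$ containing an appropriate iterate of $p$; then $X_0$ is a topological graph, this iterate of $p$ becomes a fixed point of $g$, and $g$ is transitive on the perfect invariant set $E\cap X_0$ (transitivity of the restriction follows from the cycle structure and transitivity of $f|_E$). The key claim to establish is then: a transitive graph map possessing a periodic point must have positive topological entropy. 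To prove this, I would use transitivity of $g$ together with the fixed point to locate, for some large $N$, two pairwise disjoint closed subarcs $I_1, I_2 \subset X_0$ such that $g^N(I_j) \supset I_1\cup I_2$ for $j=1,2$. Such a horseshoe yields a factor of the full shift on two symbols, giving positive entropy for $g$ and hence for $f$ itself, since $\htop(f^k)=k\cdot \htop(f)$.

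For part (2), I would first pin down the topology of $G$. Transitivity of $f$ together with the absence of periodic points rules out $G$ being a tree (as trees have the fixed point property for continuous self-maps) and eliminates endpoints more generally. A careful analysis of branch points, using that their set is finite while $f$ must permute certain structural pieces of $G$, forces $G$ to have no branch points. Hence $G$ is homeomorphic to $\bbs^1$. The next step is to observe that a transitive circle map with no periodic points must be a homeomorphism, since non-injectivity would force folding and hence fixed or periodic points. Finally, I would apply Poincar\'e's theory: $f$ has a rotation number $\rho$, which is irrational by the absence of periodic points, and a transitive circle homeomorphism with irrational rotation number is conjugate to the rotation by $\rho$ (density of orbits rules out Denjoy-type wandering intervals, upgrading the usual semi-conjugacy to a conjugacy).

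The main obstacle is the horseshoe construction in Part (1): locating disjoint subarcs $I_1,I_2$ inside a graph component with the required covering relation requires care, because transitivity alone on a perfect set does not immediately supply a classical interval-map horseshoe; one must exploit that $X_0$ has only finitely many branch points and that $E\cap X_0$ is uncountable, then carefully track how long orbit segments wind through $X_0$. A secondary obstacle in Part (2) is cleanly ruling out branch points, which requires either invariant-measure arguments for the transitive system or a direct analysis of orbit closures of the (finitely many) branch points.
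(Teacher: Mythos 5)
This theorem is not proved in the paper at all: it is quoted as a known result of Blokh, stated as a combination of Theorems 20 and 22 of \cite{RS14}, so what you are proposing is to reprove Blokh's theorems, and your outline leaves precisely their hard cores open. In part (1) the reduction is already flawed: by Lemma~\ref{lem:non-solenoid}, transitivity holds for $f|_E$ on the basic set $E$, which is a perfect invariant set but \emph{not} a subgraph, while $g=f^k|_{X_0}$ is in general not transitive on the component $X_0$ (for interval maps with a basic set the map is typically far from transitive on the enclosing cycle of intervals). Hence your key claim, ``a transitive graph map possessing a periodic point has positive entropy,'' cannot be invoked for the system you actually have; and for a transitive system with a periodic point on a general compact set the conclusion is simply false (zero-entropy transitive subshifts containing a fixed point exist), so the graph structure must first be restored --- this is exactly what the almost conjugacy of $f|_E$ with a transitive map $g$ on a finite union of graphs in Lemma~\ref{lem:non-solenoid}(3) is for. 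Even granting that reduction, the statement that a transitive graph map not conjugate to an irrational rotation has positive entropy is itself a substantial theorem of Blokh; your horseshoe construction, which you yourself flag as ``the main obstacle,'' is the entire missing content rather than a routine verification.

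In part (2), the step ``non-injectivity would force folding and hence fixed or periodic points'' is false as stated: take a Denjoy homeomorphism and replace it inside each wandering complementary arc by a non-monotone surjection onto the image arc; the result is a non-invertible degree-one circle map with no periodic points. So injectivity cannot be deduced from the absence of periodic points alone; it must be extracted from transitivity, for instance via the Auslander--Katznelson structure of circle maps without periodic points (unique minimal set and a monotone degree-one semi-conjugacy onto an irrational rotation) together with an argument that transitivity forces all fibres of that semi-conjugacy to be degenerate. The elimination of branch points, which you explicitly defer, likewise needs a genuine argument using both transitivity and the absence of periodic points. The global strategy (reduce the topology of $G$ to the circle, then apply Poincar\'e--Denjoy theory, and extract horseshoes for basic sets) is the natural one and roughly parallels how the cited results are proved in \cite{Blokh} and \cite{RS14}, but as written the proposal has genuine gaps at the decisive steps of both parts.
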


\begin{lem}\label{lem:solenoid-per}
Let $f\colon G\to G$ be a graph map and $x\in G$.
\begin{enumerate}
	\item If $\omega_f(x)$ is a solenoid, then $\omega_f(x)\cap \overline{\per(f)}$ is uncountable, where $\per(f)$ is the collection of all periodic points of $f$.
	\item If $\omega_f(x)$ is not a solenoid and  $E(X,f)$ with $X$ provided by Lemma~\ref{lem:non-solenoid} is a circumferential,
	then $\omega_f(x)\cap \overline{\per(f)}$ is finite.
\end{enumerate}	
\end{lem}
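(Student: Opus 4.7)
The plan is to split along the solenoidal/non-solenoidal dichotomy, exploiting the structure lemmas of this section. The main technical obstacle is part~(1), where I must verify simultaneously that uncountably many components of $P(x)$ are singletons and that every such singleton belongs to $\omega_f(x)$; the periodic-point construction on arc components, and the finiteness argument via $\partial X$ in the circumferential case, are comparatively mechanical.

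For part~(1), I first invoke Lemma~\ref{lem:solenoid} to fix a nested sequence $(X_n)_{n=1}^\infty$ of cycles of graphs with periods $k_n\to\infty$. For all sufficiently large $n$, since $G$ has only finitely many branch points while $k_n\to\infty$, at least one connected component $K_n^0$ of $X_n$ is an arc. Then $f^{k_n}$ is a continuous interval map of $K_n^0$ onto itself, so it has a fixed point $q$ by the intermediate value theorem, and the $f$-orbit of $q$ supplies a periodic point of period dividing $k_n$ inside every connected component of $X_n$.

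Next, I would identify the connected components of $P(x)=\bigcap_n X_n$ with the inverse limit of the projections $\bbz/k_{n+1}\bbz\to\bbz/k_n\bbz$: this inverse limit is uncountable because each projection has fibres of size $k_{n+1}/k_n\ge 2$. On the other hand, any disjoint family of non-degenerate subcontinua of $G$ is at most countable (each contains a subarc of positive length while $G$ has finite total length), so uncountably many components of $P(x)$ are singletons $\{p\}$. For such a $p$, letting $K_n(p)$ be the component of $X_n$ containing $p$, one has $\bigcap_n K_n(p)=\{p\}$ and hence $\diam K_n(p)\to 0$. The periodic points $q_n\in K_n(p)$ from the previous step converge to $p$, so $p\in\overline{\per(f)}$. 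Moreover, since $f$ cyclically permutes the components of $X_n$ and $\omega_f(x)$ is $f$-invariant, $\omega_f(x)\cap K_n(p)\neq\emptyset$ for every $n$, and a convergent selection $z_n$ from these intersections gives $p\in\omega_f(x)$. This yields uncountably many elements of $\omega_f(x)\cap\overline{\per(f)}$.

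For part~(2), I apply Lemma~\ref{lem:non-solenoid} to fix the minimal cycle of graphs $X\in\mathcal{C}(x)$ with $\omega_f(x)\subset E(X,f)\subset X$. The hypothesis that $E(X,f)$ is circumferential means, by definition, that $X$ contains no periodic points of $f$. Since $X$ is closed in $G$, every point of $\overline{\per(f)}\cap X$ must be a limit of periodic points drawn from $G\setminus X$, so it belongs to the topological boundary $\partial X$ of $X$ in $G$. But $X$ is a finite union of subgraphs of the finite graph $G$, so $\partial X$ is finite, which gives $\omega_f(x)\cap\overline{\per(f)}\subset\partial X$ finite.
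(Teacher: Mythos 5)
Your proof is correct and follows essentially the same route as the paper: a branch-point-free (hence arc) component of $X_n$ gives an interval map with a fixed point, so every component of $X_n$ contains a periodic point, whence points of $\overline{\per(f)}$ appear in the uncountably many components of $P(x)$; the circumferential case is likewise handled via the finite boundary of $X$. You merely spell out two steps the paper leaves implicit, namely that uncountably many components of $P(x)$ are singletons and that each such singleton belongs to $\omega_f(x)$, which is a welcome clarification rather than a deviation.
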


\begin{proof}
(1) Let $(X_n)_{n=1}^\infty$ and $(k_n)_{n=1}^\infty$ be provided by Lemma \ref{lem:solenoid}. 
As $G$ has only finite many branching points,
for sufficiently large $n$, there exists a connected component $J_n$ of $X_n$
such that $J_n$ does not contain branch points of $G$.
Then $f^{k_n}|_{J_n}$ is an interval map and must contain some periodic point for $f^{k_n}$. 
This implies that every connected component  of $X_n$ contains some periodic point for $f$. By the construction of $P(x)$, 
every connected component  of $P(x)$ contains a point which is a limit point of periodic points. 
So $\omega_f(x)\cap \overline{\per(f)}$ is uncountable.

(2) By the definition of circumferential set,
the cycle of graphs $X$ does not contain periodic points.
Then $X\cap \overline{\per(f)}$ is contained in the boundary of $X$ which is finite. 
Note that $\omega_f(x) \subset X$, so $\omega_f(x)\cap \overline{\per(f)}$ is finite as well.
\end{proof}

\begin{lem}\label{lem:solenoid-f-fm}
	Let $f\colon G\to G$ be a graph map and $x\in G$.
	Then the following are equivalent:
	\begin{enumerate}
		\item $\omega_f(x)$ is a solenoid for $f$;
		\item for every $m\in\bbn$,
		$\omega_{f^m}(x)$ is a solenoid for $f^m$;
		\item there exists some $m\in\bbn$ such that
		$\omega_{f^m}(x)$ is a solenoid for $f^m$.
	\end{enumerate}
\end{lem}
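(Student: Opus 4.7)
For $(1)\Rightarrow (2)$, I apply Lemma~\ref{lem:solenoid} to obtain $f$-cycles of graphs $(X_n)$ with periods $k_n\to\infty$ containing $\omega_f(x)$. Since the $k_n$ components of $X_n$ are cycled by $f$, under $f^m$ they split into $\gcd(k_n,m)$ orbits of components, each forming an $f^m$-cycle of graphs of $f^m$-period $k_n/\gcd(k_n,m)\ge k_n/m$. A continuity argument on the tail of the $f^m$-orbit of $x$ (similar to the one implicit in Lemma~\ref{lem:solenoid2}(1)) shows that $\omega_{f^m}(x)$ is contained in a single such $f^m$-orbit of components $Y_n\subset X_n$, yielding $f^m$-cycles of graphs with periods tending to infinity and containing $\omega_{f^m}(x)$. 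Moreover $\omega_{f^m}(x)$ is infinite, for otherwise $\omega_f(x)=\bigcup_{i=0}^{m-1}f^i(\omega_{f^m}(x))$ would be finite. Hence $\omega_{f^m}(x)$ is a solenoid for $f^m$. The implication $(2)\Rightarrow (3)$ is trivial.

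For $(3)\Rightarrow (1)$, assume $\omega_{f^m}(x)$ is a solenoid for $f^m$. Then $\omega_f(x)\supset\omega_{f^m}(x)$ is infinite. Suppose for contradiction that $\omega_f(x)$ is not a solenoid for $f$. By Lemma~\ref{lem:non-solenoid} there is a minimal $f$-cycle of graphs $X^*\in\mathcal{C}(x)$ of finite period and the set $E^*=E(X^*,f)$ is either circumferential or basic. Since $\per(f^m)=\per(f)$ as subsets of $G$, Lemma~\ref{lem:solenoid-per}(1) applied to $(f^m,\omega_{f^m}(x))$ yields that $\omega_{f^m}(x)\cap\overline{\per(f)}$ is uncountable, hence $\omega_f(x)\cap\overline{\per(f)}$ is uncountable. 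If $E^*$ were circumferential, Lemma~\ref{lem:solenoid-per}(2) would force $\omega_f(x)\cap\overline{\per(f)}$ to be finite, a contradiction; so the circumferential subcase is ruled out.

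The \textbf{main obstacle} is ruling out the basic subcase. For this, I use the almost conjugacy $\varphi\colon X^*\to Y$ provided by Lemma~\ref{lem:non-solenoid}(3) between $f|_{E^*}$ and a transitive graph map $g\colon Y\to Y$ on a finite union of graphs $Y$. Since $g$ is transitive, the iterate $g^m$ admits a spectral decomposition $Y=Y_0\sqcup Y_1\sqcup\cdots\sqcup Y_{p-1}$ with $p\le m$, the $Y_i$ cyclically permuted by $g$ and $g^m|_{Y_i}$ transitive; in particular every $g^m$-cycle of graphs in $Y$ has $g^m$-period bounded by a constant depending only on $g$. The plan is then to show that an $f^m$-cycle of graphs $Y_n\subset G$ of period $l_n\to\infty$ containing $\omega_{f^m}(x)$ descends through $\varphi$ (after intersecting with $X^*$) to a $g^m$-cycle of graphs in $Y$ whose $g^m$-period is exactly $l_n$, contradicting the uniform bound. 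The delicate step is the transfer of the period through $\varphi$: one must use that $\varphi$ is \emph{almost} a conjugacy only on $E^*$, namely that $\varphi^{-1}(y)$ is connected and satisfies $\varphi^{-1}(y)\cap E^*=\partial\varphi^{-1}(y)$ (Definition~\ref{df:ac}), together with the fact that $\omega_{f^m}(x)\subset E^*$ hits each component of $Y_n$, to conclude that distinct components of $Y_n$ project to distinct $g^m$-cyclically permuted components in $Y$. This yields the required contradiction, so $\omega_f(x)$ must be a solenoid for $f$.
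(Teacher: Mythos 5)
Your implication $(1)\Rightarrow(2)$ and your treatment of the circumferential subcase of $(3)\Rightarrow(1)$ are correct and essentially identical to the paper's argument (components of the $X_n$ become $f^m$-periodic subgraphs of period at least $k_n/m$; Lemma~\ref{lem:solenoid-per} rules out the circumferential case). The problem is the basic subcase of $(3)\Rightarrow(1)$, which in your write-up is only a plan, and the plan has genuine holes. First, the ``spectral decomposition'' you invoke is not among the results available in the paper, and as stated it is false: for a transitive graph map the pieces of such a decomposition are in general not pairwise disjoint (already for a transitive, non--totally transitive interval map the two halves share a fixed point), so $Y=Y_0\sqcup\cdots\sqcup Y_{p-1}$ cannot be asserted, and the uniform bound on periods of $g^m$-cycles of graphs does not follow just from ``$g^m|_{Y_i}$ transitive'' without an additional argument (a $g^m$-cycle need not sit inside a single piece, and its components may straddle pieces sharing boundary points). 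Second, the step you yourself call delicate --- that an $f^m$-cycle of period $l_n$ containing $\omega_{f^m}(x)$ pushes forward under $\varphi$ to a $g^m$-cycle of period \emph{exactly} $l_n$ --- is not proved: since $\varphi$ is only monotone, a single fiber $\varphi^{-1}(y)$ can meet two distinct components of the cycle (it is connected but need not be contained in the cycle), so disjointness of the images is not automatic from $\varphi^{-1}(y)\cap E^*=\partial\varphi^{-1}(y)$; this is exactly where a real argument is required and none is given. There is also a smaller unaddressed point: the $f^m$-cycles containing $\omega_{f^m}(x)$ need not lie in $X^*$, and ``intersecting with $X^*$'' does not in general produce a cycle of graphs (this can be repaired, e.g.\ by an argument in the spirit of Lemma~\ref{lem:solenoid2}(1) applied to $f^m$, but you would have to say so).

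For comparison, the paper disposes of the basic subcase without any decomposition or period transfer: since $X^*$ contains a periodic point, so does $Y$, and because $\varphi(E^*)=Y$ while each fiber meets $E^*$ in the finite set $\partial\varphi^{-1}(y)$ (Definition~\ref{df:ac}), a periodic point can be pulled back into $E^*$. As $f|_{E^*}$ is transitive on a perfect set, $E^*=\omega_f(z)$ for some $z$, hence some $\omega_{f^m}(f^j(z))$ contains a periodic point and meets $\omega_{f^m}(x)$; by Lemma~\ref{lem:solenoid cup} it would then be contained in a solenoid for $f^m$, contradicting Lemma~\ref{lem:solenoid}(3), which forbids periodic points in solenoids. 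I would recommend replacing your basic-case plan by an argument of this kind, or else supplying full proofs of the decomposition bound and of the period-preservation claim through $\varphi$.
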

\begin{proof}
(1)$\Rightarrow$(2)
Let $(X_n)_{n=1}^\infty$ and $(k_n)_{n=1}^\infty$ be provided for $\omega_f(x)$ by Lemma \ref{lem:solenoid}.	
Fix $m\in\bbn$. For every $n\in\bbn$, 
there exists a connected component $J_n$ of $X_n$ intersecting $\omega_{f^m}(x)$. Then $J_n$ is a periodic subgraph for $f^m$ with
period $s_n\in [\frac{k_n}{m}, k_n]$, and $\omega_{f^m}(x)
\subset \orb_{f^m}(J_n)$.
It is clear that $\lim_{n\to\infty} s_n=\infty$ as $\lim_{n\to\infty} k_n=\infty$.
So $\omega_{f^m}(x)$ is a solenoid for $f^m$.

(2) $\Rightarrow$(3) is clear.

(3)$\Rightarrow$(1) By Lemma \ref{lem:solenoid-per},
$\omega_{f^m}(x)\cap \overline{\per(f)}$ is uncountable and so
$\omega_f(x)\cap \overline{\per(f)}$ is also uncountable.
Assume that $\omega_f(x)$ is not a solenoid and let $E(X,f)$ be defined for $X$ provided by Lemma~\ref{lem:non-solenoid}. 
If $E(X,f)$ is a circumferential set, then by Lemma~\ref{lem:solenoid-per}, $\omega_f(x)\cap \overline{\per(f)}$ is finite, 
which is a contradiction.

If $E(X,f)$ is a  basic set, it means that $X$ contains a periodic point. Let the dynamical system $(Y,g)$ and semi-conjugacy $\varphi$ be provided by Lemma~\ref{lem:non-solenoid}. By the properties of the semi-conjugacy we have that $\varphi\circ f^n=g^n \circ \varphi$, and then $(Y,g)$ contains a periodic point. By the definition of the almost conjugates $\varphi(E)=Y$ and $\varphi^{-1}(y)\cap E$ is finite for any $y\in Y$, so $E$ contains a periodic point too. Let $E=\omega_f(z)$. Then $\omega_{f^m}(f^i(z))$ contains a periodic point for all $0\leq i<m$, and there exists $0\leq j<m$ such that $\omega_{f^m}(f^j(z))\cap \omega_{f^m}(x)\neq \emptyset$. Based on the given conditions  $\omega_{f^m}(x)$ is a solenoid for $f^m$, by the Lemma \ref{lem:solenoid cup}, then $\omega_{f^m}(f^j(z))$ is a solenoid for $f^m$. By Lemma \ref{lem:solenoid} 
there are no periodic points of $f^m$ in $\omega_{f^m}(f^j(z))$ , so $\omega_{f^m}(f^j(z))$ does not contain periodic points of $f$, 
which is also a contraction.
Thus $\omega_f(x)$ is a solenoid.
\end{proof}

\section{Non-separable points and NS-pairs}\label{sec:NSpairs}
The aim of this section is to prove Theorem \ref{thm:chaos-NS-pair}.
At first, we introduce the concept of separable points for graph maps, which is a natural extension of definitions existing for interval maps. Let us recall at this point, that periodic subgraphs are closed by definition.
\begin{defn}
Let $f\colon G\to G$ be a graph map and let $x$, $y$ be two distinct points in $G$. The points $x,y$ are called \emph{separable} (for $f$) if there exist two disjoint periodic subgraphs $I$ and $J$ such that $x\in  I$ and $y\in J$. Otherwise we say that they are \emph{non-separable}.

It is clear that if $x,y$ are separable for $f$ then for every $k\in\bbn$, $x,y$ are also separable for $f^k$. 
In other words, if there exists $k\in\bbn$ such that 
$x,y$ are non-separable for $f^k$, then $x,y$ are also non-separable for $f$.
\end{defn}

First we have the following results on non-separable pairs.

\begin{lem}\label{lem:non-seq-eq}
Let $f\colon G\to G$ be a graph map, $z\in G$ with $\omega_f(z)$ being a solenoid and $x,y\in \omega_f(z)$.
Then the following are equivalent:
\begin{enumerate}
	\item $x$ and $y$ are non-separable;
	\item there exists a sequence $(J_n)_{n=1}^{\infty}$ of periodic subgraphs with periods $(m_n)_{n=1}^{\infty}$
	such that $J_n\supset J_{n+1}$, $\lim_{n\to\infty} m_n=\infty$
	and $x,y\in J_n$ for all $n\in\bbn$;
	\item  $x,y$ are in the same connected 
	component of $P(z)$, where $P(z)$ is defined as in \eqref{def:Px}.
\end{enumerate}
\end{lem}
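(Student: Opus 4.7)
The plan is to establish the cycle $(1)\Rightarrow(2)\Rightarrow(3)\Rightarrow(1)$, using throughout the canonical sequence of cycles of graphs $(X_n)_{n=1}^{\infty}$ with periods $(k_n)_{n=1}^\infty$ associated to the solenoid $\omega_f(z)$ by Lemma \ref{lem:solenoid}.

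For $(1)\Rightarrow(2)$, I would simply let $J_n$ be the connected component of $X_n$ that contains $x$; this is automatically a periodic subgraph of period $m_n:=k_n$, the $J_n$ nest by $X_{n+1}\subset X_n$, and $m_n\to\infty$ by Lemma \ref{lem:solenoid}(1). The only nontrivial observation is that $y\in J_n$: if instead $y$ lay in a different connected component $J_n'$ of $X_n$, then $(J_n,J_n')$ would be disjoint periodic subgraphs witnessing separability of $x$ and $y$, contradicting (1).

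For $(2)\Rightarrow(3)$, I would pass from $J_n$ to its orbit $\orb_f(J_n)$, which is a cycle of graphs of period $m_n$ meeting $\omega_f(z)$, so Lemma \ref{lem:solenoid2}(1) gives $\omega_f(z)\subset \orb_f(J_n)$. The nesting $\orb_f(J_{n+1})\subset \orb_f(J_n)$ is immediate from $J_{n+1}\subset J_n$ and $f$-invariance of $\orb_f(J_n)$, so Lemma \ref{lem:solenoid2}(2) yields $\bigcap_n \orb_f(J_n)=P(z)$. The set $K:=\bigcap_n J_n$ is connected as a nested intersection of connected compacta, contains both $x$ and $y$, and satisfies $K\subset J_n\subset \orb_f(J_n)$ for every $n$, hence $K\subset P(z)$ and $x,y$ lie in the same connected component of $P(z)$.

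For $(3)\Rightarrow(1)$, I would argue by contradiction. Assume $x,y$ are separable and fix disjoint periodic subgraphs $I,J$ with $x\in I$, $y\in J$. Since $x\in I\cap\omega_f(z)$, Lemma \ref{lem:solenoid2}(1) gives $\omega_f(z)\subset \orb_f(I)$, hence also $y\in \orb_f(I)$; but $y\in J$ is disjoint from $I$, so $y$ lies in a connected component $f^i(I)$ of $\orb_f(I)$ distinct from $I$. By Remark \ref{rem:P(x)}, $P(z)\subset \orb_f(I)$, and since the components $I,f(I),\dots,f^{p-1}(I)$ of $\orb_f(I)$ are pairwise disjoint, the connected component of $P(z)$ containing $x$ must lie entirely inside $I$, so it cannot contain $y$, a contradiction. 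The main obstacle lies in $(2)\Rightarrow(3)$: the $J_n$ themselves are not members of $\mathcal{C}(z)$, so one must pass to their orbits and invoke Lemma \ref{lem:solenoid2}(2) to identify $\bigcap_n \orb_f(J_n)$ with $P(z)$; the remaining directions reduce to routine applications of the fact that any cycle of graphs meeting a solenoidal $\omega$-limit set must contain it entirely.
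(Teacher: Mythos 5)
Your proposal is correct and follows essentially the same route as the paper: the same cycle $(1)\Rightarrow(2)\Rightarrow(3)\Rightarrow(1)$, using the canonical cycles $(X_n)$ from Lemma~\ref{lem:solenoid} and the fact (Lemma~\ref{lem:solenoid2}, Remark~\ref{rem:P(x)}) that any cycle of graphs meeting the solenoid contains it, so that $P(z)=\bigcap_n\orb_f(J_n)$. Your explicit nested-continua argument in $(2)\Rightarrow(3)$ just fills in a step the paper leaves implicit, and your contrapositive form of $(3)\Rightarrow(1)$ is the same argument as the paper's direct one.
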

\begin{proof}
Let $(X_n)_{n=1}^\infty$ and  $(k_n)_{n=1}^\infty$ be provided as in 
Lemma~\ref{lem:solenoid} for $\omega_f(z)$. 
By the definition of cycle of graphs,
each $X_n$ is the orbit of a periodic subgraph which is denoted by $I_n$. We may also requite that $I_{n+1}\subset I_n$.

(1)$\Rightarrow$(2)
Since $x,y\in X_n$ are non-separable,
there exists $0\leq i\leq k_n-1$ such that $x,y\in f^i(I_n)$.
Let $J_n= f^i(I_n)$ and $m_n=k_n$.
Clearly $J_{n+1}\subset J_n$ and so (2) is satisfied.

(2)$\Rightarrow$(3) By the definition of $P(z)$, one has
\[
P(z)=\bigcap_{n=1}^\infty X_n=\bigcap_{n=1}^\infty \bigcup_{i=0}^{m_n-1}f^i(J_n).
\]
So $x,y$ are in the same connected 
component of $P(z)$.

(3) $\Rightarrow$(1) 
Let $K$ be a periodic subgraph with period $p$ such that $x\in K$. Then $\bigcup_{i=0}^{p-1} f^i(K)$ is a cycle of graphs, and $x,y\in \omega_f(z)\subset \bigcup_{i=0}^{p-1} f^i(K)$ by Lemma \ref{lem:solenoid2}.
By the remark \ref{rem:P(x)} $P(z)\subset \bigcup_{i=0}^{p-1} f^i(K)$. Assume that $x,y$ are in the same connected 
component of $P(z)$, then $x,y$ are in the same connected 
component of $\bigcup_{i=0}^{p-1} f^i(K)$, so $y\in K$. Therefore $x$ and $y$ are non-separable. 
\end{proof}

\begin{lem}\label{lem:non-seq-eq2}
Let $f\colon G\to G$ be a graph map and let $x, y \in \omega(f)$ be two distinct points.
If there exists a sequence $(J_n)_{n=1}^{\infty}$ of periodic subgraphs with periods $(m_n)_{n=1}^{\infty}$
such that $J_n\supset J_{n+1}$, $\lim_{n\to\infty} m_n=\infty$
and $x,y\in J_n$ for all $n\in\bbn$, then 
\begin{enumerate}
	\item there exists a point $z\in G$ such that $x,y\in\omega_f(z)$;
	\item $\omega_f(x)$, $\omega_f(y)$ and $\omega_f(z)$ are solenoids;
	\item $x$ and $y$ are non-separable. 
\end{enumerate}
\end{lem}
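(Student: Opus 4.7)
The plan is to form the cycles of graphs $X_n=\bigcup_{i=0}^{m_n-1}f^i(J_n)$ generated by the nested periodic subgraphs. Since $J_{n+1}\subset J_n$ and $J_n$ has period $m_n$, the invariance of $\orb_f(J_n)$ forces $m_n\mid m_{n+1}$ and $X_{n+1}\subset X_n$; moreover each $X_n$ is $f$-invariant. Put $P=\bigcap_{n\geq 1}X_n$, and observe that $x,y\in\bigcap_n J_n\subset P$.

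For (1) and the first part of (2), I will invoke Lemma~\ref{lem:seq-cycle-graphs}, which applies because $m_n\to\infty$. It produces a unique maximal $\omega$-limit set $\omega_f(z)\subset P$, and its proof already shows $\omega_f(z)$ is a solenoid. Because $x\in\omega(f)$, there is some $a$ with $x\in\omega_f(a)$, so $\omega_f(a)\cap P\neq\emptyset$, and the maximality clause forces $\omega_f(a)\subset\omega_f(z)$; hence $x\in\omega_f(z)$. The identical argument applied to $y$ gives $y\in\omega_f(z)$, establishing~(1). For the remaining parts of (2), $\omega_f(x)\subset\omega_f(z)$ contains no periodic points by Lemma~\ref{lem:solenoid}(3), hence is infinite; while $\omega_f(x)\subset X_n$ for every $n$ (since $x\in X_n$ and $X_n$ is invariant), so each $X_n\in\mathcal{C}(x)$ is a cycle of graphs of period $m_n$, forcing $\mathcal{C}_P(x)=+\infty$. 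By definition $\omega_f(x)$ is a solenoid, and the argument for $\omega_f(y)$ is identical.

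Part~(3) then drops out of the previous lemma. Having produced $z$ with $\omega_f(z)$ a solenoid containing $x$ and $y$, the hypothesis of the present lemma is precisely condition~(2) of Lemma~\ref{lem:non-seq-eq}, whose equivalence with condition~(1) yields that $x,y$ are non-separable.

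The main obstacle, as I see it, is the production in~(1) of a single $z$ whose $\omega$-limit set contains both $x$ and $y$; the hypothesis only guarantees that $x$ and $y$ belong separately to \emph{some} $\omega$-limit sets. Everything rests on exploiting the maximality clause in Lemma~\ref{lem:seq-cycle-graphs}: any $\omega$-limit set meeting $P$ must sit inside $\omega_f(z)$, so once $x,y$ are placed in $P$ and in $\omega(f)$, they must themselves appear in $\omega_f(z)$. The remaining work is bookkeeping with the definitions of solenoid and $\mathcal{C}_P(x)$, together with a direct appeal to Lemma~\ref{lem:non-seq-eq}.
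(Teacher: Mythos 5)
Your proposal is correct and follows essentially the same route as the paper: set $X_n=\orb_f(J_n)$, apply Lemma~\ref{lem:seq-cycle-graphs} to get the maximal $\omega$-limit set $\omega_f(z)$ in $\bigcap_n X_n$ containing $x$ and $y$, observe the relevant $\omega$-limit sets lie in every $X_n$ (hence are solenoids), and conclude non-separability from Lemma~\ref{lem:non-seq-eq}. You merely make explicit some steps the paper leaves implicit, such as using $x,y\in\omega(f)$ together with the maximality clause to place $x,y$ in $\omega_f(z)$, and the absence of periodic points to guarantee infiniteness.
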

\begin{proof}
(1) For every $n\in\bbn$, let $X_n=\orb_f(J_n)$.
By Lemma \ref{lem:seq-cycle-graphs}, there exists a unique maximal $\omega$-limit set $\omega_f(z)$ in $\bigcap_{n=1}^\infty X_n$ 
and $x,y\in\omega_f(z)$.

(2) It is clear that $\omega_f(x),\omega_f(x),\omega_f(z)\subset X_n$
for all $n\in\bbn$. Then $\omega_f(x)$, $\omega_f(y)$ and $\omega_f(z)$ are solenoids.

(3) It follows from Lemma~\ref{lem:non-seq-eq}, (1) and (2).
\end{proof}

For an interval map with zero topological entropy, we are interested in non-separable points contained in an infinite $\omega$-limit set. Note that in this case the $\omega$-limit set is a solenoid.
Based on Lemmas \ref{lem:non-seq-eq} and \ref{lem:non-seq-eq2}, we may introduce the following
definition of NS-pair for graph map. The advantage is that it does not explicitly involve the notion of solenoid in the definition.

\begin{defn}
Let $f\colon G\to G$ be a graph map.
A pair $\langle x,y\rangle\in G\times G$ with $x\neq y$ is an \emph{NS-pair} if $x,y\in \omega(f)$ and 
there exists 
a sequence $(J_n)_{n=1}^{\infty}$ of periodic subgraphs with periods $(m_n)_{n=1}^{\infty}$
such that $J_n\supset J_{n+1}$, $\lim_{n\to\infty} m_n=\infty$
and $x,y\in J_n$ for all $n\in\bbn$.
\end{defn}

\begin{rem}
	By Lemma \ref{lem:non-seq-eq2} we obtain that a pair $\langle x,y\rangle$ is an NS-pair 
	if and only if $x$, $y$ are non-separable and they are contained in some solenoid. 
\end{rem}

\begin{prop}\label{hipow:nonsep}
Let $f\colon G\to G$ be a graph map 
and $\langle x,y\rangle\in G\times G$ with $x\neq y$.
Then the following statements are equivalent:
\begin{enumerate}
	\item $\langle x,y\rangle$ is an NS-pair for $f$;\label{NS:1}
	\item $\langle x,y\rangle$ is an NS-pair for $f^k$ for some $k\in\bbn$;\label{NS:2}
	\item $\langle x,y\rangle$ is an NS-pair for all $f^k$ with $k\in\bbn$.\label{NS:3}
\end{enumerate}	
\end{prop}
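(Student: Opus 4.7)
The plan is to establish the cycle $(1)\Rightarrow(3)\Rightarrow(2)\Rightarrow(1)$; the middle implication is immediate as one just restricts to a single value of $k$, and at each step the requirement that $x,y$ lie in the $\omega$-limit set of the appropriate iterate is automatic from $\omega(f)=\omega(f^j)$ for every $j\in\bbn$.

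For $(1)\Rightarrow(3)$, I would argue directly with the nested periodic subgraphs supplied by the definition. Let $(I_n)_{n=1}^\infty$ be nested $f$-periodic subgraphs of periods $m_n\to\infty$ containing both $x$ and $y$, and fix $k\in\bbn$. The key elementary observation is that any $f$-periodic subgraph with period $m$ is automatically an $f^k$-periodic subgraph of period $m/\gcd(m,k)$: its $f^k$-orbit is a subcollection of the pairwise disjoint $f$-orbit, and $m/\gcd(m,k)$ is readily checked to be the least $j\geq 1$ with $(f^k)^j(I_n)=I_n$. Since $m_n/\gcd(m_n,k)\geq m_n/k\to\infty$, the nesting $I_n\supset I_{n+1}$ and the inclusions $x,y\in I_n$ carry over verbatim, so $\langle x,y\rangle$ is an NS-pair for $f^k$.

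For $(2)\Rightarrow(1)$, rather than attempting to transfer subgraphs directly in the opposite direction, I route through the remark following the definition of NS-pair, which characterizes NS-pairs as pairs of non-separable points contained in a common solenoid. Assuming this holds for $f^k$, there is a point $z\in G$ with $\omega_{f^k}(z)$ a solenoid of $f^k$ containing both $x$ and $y$, and $x,y$ are non-separable for $f^k$. The observation recorded right after the definition of separability immediately promotes non-separability from $f^k$ to $f$. For the solenoid, Lemma \ref{lem:solenoid-f-fm} gives that $\omega_f(z)$ is a solenoid for $f$, and $\omega_{f^k}(z)\subset\omega_f(z)$ ensures $x,y\in\omega_f(z)$. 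The reverse direction of the remark then assembles these ingredients into an NS-pair for $f$.

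The only subtle point is this asymmetry in $(2)\Rightarrow(1)$: an $f^k$-periodic subgraph is generally \emph{not} $f$-periodic, because its $f$-orbit may exhibit self-overlaps, so the naive dual of the $(1)\Rightarrow(3)$ construction is not available. Lemma \ref{lem:solenoid-f-fm}, together with the built-in monotonicity of separability under iteration, is exactly what bypasses this obstacle, which is why the solenoid detour is essentially forced.
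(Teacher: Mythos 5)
Your proposal is correct and follows essentially the same route as the paper: the forward direction transfers the nested $f$-periodic subgraphs to $f^k$ with periods still tending to infinity (you give the exact value $m_n/\gcd(m_n,k)$ where the paper only notes $s_n\in[m_n/k,m_n]$), and the converse goes through the characterization of NS-pairs as non-separable points in a common solenoid, using Lemma~\ref{lem:solenoid-f-fm} to pass the solenoid from $f^k$ to $f$ and the monotonicity of separability to pass non-separability, exactly as in the paper's proof via Lemmas~\ref{lem:non-seq-eq} and~\ref{lem:non-seq-eq2}.
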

\begin{proof}
$\eqref{NS:3}\Rightarrow \eqref{NS:2}$ It is obvious. 
	
$\eqref{NS:2}\Rightarrow\eqref{NS:1}$ 
By Lemma~\ref{lem:non-seq-eq2}, there exists $z\in G$ such that 
$\omega_{f^k}(z)$ is a solenoid for $f^k$ and $x,y\in \omega_{f^k}(z)$.
By Lemma~\ref{lem:solenoid-f-fm},
$\omega_f(z)$ is also a solenoid for $f$.
As $x$ and $y$ are non-separable for $f^k$, it is easy to see that $x$ and $y$ are also non-separable for $f$. Now by Lemma \ref{lem:non-seq-eq}, $\langle x,y\rangle$ is an NS-pair for $f$.
	
$\eqref{NS:1}\Rightarrow \eqref{NS:3}$ 
Assume that  $\langle x,y\rangle$ is an NS-pair for $f$ and fix any integer $k>1$.
Then $x,y\in \omega(f)=\omega(f^k)$.
It is clear that the sequence $(J_n)_{n=1}^{\infty}$ of periodic subgraphs with periods $(m_n)_{n=1}^{\infty}$ 
provided by definition of NS-pair is also periodic under $f^k$, 
and new period $s_n$ of $J_n$ is an integer $s_n\in [m_n/k, m_n]$, in particular $\lim_{n\to\infty} s_n=\infty$.
So $\langle x,y\rangle$ is an NS-pair for $f^k$.
\end{proof}

\begin{lem}\label{lem:NS-pair-preimage}
	Let $f\colon G\to G$ be a graph map and $\langle x,y\rangle$ be an NS-pair. Let periodic subgraphs $(J_n)_{n=1}^{\infty}$ 
	and $(m_n)_{n=1}^{\infty}$ be provided by the definition of the NS-pair $\langle x,y\rangle$. 
	Then for every $k\in\bbn$ and $0\leq i\leq m_k-1$, there exist $x^*\in f^{-(m_k-i)}(x)\cap f^i(J_k)$ 
	and $y^*\in f^{-(m_k-i)}(y)\cap f^i(J_k)$  such that 
	$\langle x^*,y^*\rangle$ is an NS-pair for $f^{m_k}|_{f^i(J_k)}$.
\end{lem}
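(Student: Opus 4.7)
The plan is to pick $x^*, y^*$ inside a solenoidal $\omega$-limit set by invoking strong invariance, and then verify the NS-pair conditions via cycle counting. By Lemma~\ref{lem:non-seq-eq2} there exists $z \in G$ such that $\omega_f(z)$ is a solenoid containing $\{x,y\}$; in particular $\omega_f(z) \subset P(z) \subset \orb_f(J_n)$ for every $n$, by Remark~\ref{rem:P(x)}. Since $J_n \subset J_k$ and both are $f$-periodic, pairwise disjointness of the pieces of $\orb_f(J_k)$ forces $m_k \mid m_n$ for every $n \geq k$: from $J_n = f^{m_n}(J_n) \subset J_k \cap f^{m_n \bmod m_k}(J_k)$ one reads off $m_n \equiv 0 \pmod{m_k}$. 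Set $J'_n := f^{m_n - m_k}(J_n)$, the unique piece of the $f^{m_k}$-orbit of $J_n$ that is sent onto $J_n$ by $f^{m_k}$; then $J'_n \subset J_k$ is $f^{m_k}$-periodic of period $m_n/m_k$, and $J_{n+1} \subset J_n$ together with $m_n \mid m_{n+1}$ yields $J'_{n+1} \subset J'_n$ by the same modular calculation.

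Apply the standard strong invariance $f^{m_k-i}(\omega_f(z)) = \omega_f(z)$ (immediate from compactness and continuity) to pick $x^* \in \omega_f(z) \cap f^{-(m_k-i)}(x)$ and $y^* \in \omega_f(z) \cap f^{-(m_k-i)}(y)$; since $x \neq y$, also $x^* \neq y^*$. A first round of piece-counting locates them: $x^* \in \omega_f(z) \subset \orb_f(J_k)$ puts $x^*$ in some piece $f^j(J_k)$, and $f^{m_k-i}(f^j(J_k)) = f^{j+m_k-i}(J_k) \ni x \in J_k$ together with disjointness forces $j \equiv i \pmod{m_k}$, so $x^*, y^* \in f^i(J_k)$. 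A second, finer round uses $x^* \in P(z) \subset \orb_f(J_n)$ to place $x^*$ in a piece $f^{i+lm_k}(J_n) \subset \orb_f(J_n) \cap f^i(J_k)$; then $f^{(l+1)m_k}(J_n) \ni x \in J_n$ forces $(l+1)m_k \equiv 0 \pmod{m_n}$, i.e.\ $l = m_n/m_k - 1$, giving $x^* \in f^{i+m_n-m_k}(J_n) = f^i(J'_n)$, and similarly $y^* \in f^i(J'_n)$ for every $n \geq k$.

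Consequently $(f^i(J'_n))_{n \geq k}$ is a nested sequence of $(f^{m_k}|_{f^i(J_k)})$-periodic subgraphs with periods $m_n/m_k \to \infty$, with both $x^*$ and $y^*$ sitting in every term. For the remaining NS-pair requirement $x^*, y^* \in \omega(f^{m_k}|_{f^i(J_k)})$, use the decomposition $\omega_f(z) = \bigcup_{j=0}^{m_k-1} \omega_{f^{m_k}}(f^j(z))$: exactly one summand $\omega_{f^{m_k}}(f^{j_0}(z))$ is contained in the piece $f^i(J_k)$ (the other summands lie in other pieces), and since $f^{m_k}(f^i(J_k)) = f^i(J_k)$ the orbit of $f^{j_0}(z)$ under $f^{m_k}$ eventually enters $f^i(J_k)$, realising this summand as $\omega_{f^{m_k}|_{f^i(J_k)}}(v)$ for a suitable $v \in f^i(J_k)$; both $x^*$ and $y^*$ belong to it since $\omega_f(z) \cap f^i(J_k)$ equals this summand. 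The main obstacle is the double modular piece-counting: first pinning $x^*$ to the piece $f^i(J_k)$ of $\orb_f(J_k)$, and then refining its position to $f^i(J'_n)$ inside $\orb_f(J_n)$; once the divisibility $m_k \mid m_n$ is established, everything else reduces to careful bookkeeping with the cycle structures.
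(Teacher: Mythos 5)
Your proof is correct and takes essentially the same route as the paper's: pull $x$ and $y$ back inside the solenoid $\omega_f(z)$ (from Lemma~\ref{lem:non-seq-eq2}) by strong invariance, and observe that $x^*,y^*$ lie in the nested subgraphs $f^{m_n-(m_k-i)}(J_n)=f^i(J'_n)$, which are periodic of period $m_n/m_k\to\infty$ for $f^{m_k}|_{f^i(J_k)}$. You only add bookkeeping the paper leaves implicit (the divisibility $m_k\mid m_n$, the modular location of $x^*,y^*$ in the cycles, and the check that $x^*,y^*\in\omega\bigl(f^{m_k}|_{f^i(J_k)}\bigr)$), and these details are all carried out correctly.
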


\begin{proof}
Fix $k\in\bbn$ and $0\leq i\leq m_k-1$.
By Lemma~\ref{lem:non-seq-eq2}, there exists $z\in G$ such that 
$\omega_{f}(z)$ is a solenoid and $x,y\in \omega_{f}(z)$.
Since $\omega_f(z)$ is strongly invariant, there are $x^*,y^*\in \omega_f(z)$ such that $f^{m_k-i}(x^*)=x$ and $f^{m_k-i}(y^*)=y$. 
Then $x^*,y^*\in f^{m_n-(m_k-i)}(J_n)$ for all $n\geq k$.
According to the properties of $(J_n)_{n=1}^{\infty}$, 
we have $(f^{m_n-(m_k-i)}(J_n))_{n=k}^{\infty}$ is a sequence of periodic subgraphs with periods $(s_n)_{n=k}^{\infty}$ for $f^{m_k}|_{f^i(J_k)}$
such that $f^{m_n-(m_k-i)}(J_n)\supset f^{m_{n+1}-(m_k-i)}(J_{n+1})$ for all $n\in\bbn$. 
Since sequence $J_n$ is nested, we have $s_n=m_n/m_k$, in particular $\lim_{n\to\infty} s_n=\infty$. 
Then $\langle x^*,y^*\rangle$ is an NS-pair for $f^{m_k}|_{f^i(J_k)}$.
\end{proof}

\begin{proof}[Proof of Theorem~\ref{thm:chaos-NS-pair}]
($\Rightarrow$) Assume that $\langle x,y\rangle $ is a scrambled pair.
By the proof of \cite[Theorem 3]{RS14} one of the sets  $\omega_f(y)$, $\omega_f(x)$ is a solenoid,
so without loss of generality assume that  $\omega_f(x)$ is a solenoid.
Let $(X_n)_{n=1}^\infty$ be provided for $\omega_f(x)$ by Lemma \ref{lem:solenoid}. 
Note that for every $n$ there is $m>n$ such that for any connected component $C$ of $X_n$
there is a connected component $D$ of $X_m$ such that $D\subset \Int C$.
Fix $n\in\bbn$. 
Note that condition $\liminf_{k\to\infty}d(f^k(x),f^k(y))=0$ implies that
there exists 
$N$ such that $f^N(y)\in X_n$ and furthermore $f^N(x)$ and $f^N(y)$
are in the same connected component of $ X_n$.
This implies that
 for every $k\geq N$, $f^k(x)$ and $f^k(y)$ are in the same 
connected component of $X_n$.
Let $\delta=\limsup_{k\to\infty}d(f^k(x),f^k(y))>0$.
There exists $a_n\geq N$ such that $d(f^{a_n}(x),f^{a_n}(y))\geq \frac{\delta}{2}$.
Without loss of generality, assume that the sequence $(a_n)$
is strictly increasing and the following limits exist: $\lim_{n\to\infty}f^{a_n}(x)=x^*$
and $\lim_{n\to\infty}f^{a_n}(x)=y^*$.
Then $x^*,y^*\in \omega(f)$, they are in the same connected component of $P(x)$ and $d(x^*,y^*) \geq \frac{\delta}{2}$.
Indeed $\langle x^*,y^*\rangle $ is an NS-pair.

($\Leftarrow$)
Assume that $\langle x,y\rangle $ is an NS-pair. 
Let $(J_n)_{n=1}^{\infty}$  and  $(m_n)_{n=1}^{\infty}$ be provided by the definition of the NS-pair $\langle x,y\rangle$. 
As $G$ has only finitely many branch points,
there exist $n\in\bbn$ and $0\leq i\leq m_n-1$
such that $f^i(J_n)$ does not contain any branch point of $G$.
By Lemma~\ref{lem:NS-pair-preimage}, there exists an NS-pair $\langle x^*,y^*\rangle $ 
for $f^{m_n}|_{f^i(J_n)}$.
Note that $f^{m_n}|_{f^i(J_n)}$ is an interval map.
By \cite[Theorem 2.2]{S86} (see also \cite{R17}), a zero
entropy interval map is Li-Yorke chaotic if and only if there exists an infinite $\omega$-limit set containing two non-separable points.
Therefore $f^{m_n}|_{f^i(J_n)}$ is Li-Yorke chaotic, which immediately implies that $f$ is Li-Yorke chaotic.
\end{proof}

\section{Independence pairs in graph maps with zero topological entropy}

The aim of this section is to prove Theorem \ref{thm:IN=IT=NS}.
First we will also use the following two results.

\begin{lem}[{\cite[Lemma~5.9]{LOYZ17}}] \label{lem:IN-omega}
	Let $f\colon G\to G$ be a graph map.
	If $\langle x,y\rangle$ is an IN-pair, then $x,y\in\omega(f)$.
\end{lem}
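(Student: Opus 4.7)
The plan is to show $x \in \omega(f)$ (the argument for $y$ being symmetric) by proving that every open neighborhood $U$ of $x$ meets $\omega(f)$, and then invoking closedness of $\omega(f)$ for graph maps. Closedness of $\omega(f)$ follows from the structural machinery of Section~3 (e.g.\ Lemma~\ref{lem:seq-cycle-graphs}) combined with the Hausdorff-compactness of the family of $\omega$-limit sets quoted just before that lemma. Once we know $\overline{U}\cap\omega(f)\neq\emptyset$ for every open $U\ni x$, shrinking $U$ forces $x\in\overline{\omega(f)}=\omega(f)$.

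Fix such a $U$ and any open neighborhood $V$ of $y$. Since $\langle x,y\rangle$ is an IN-pair, for each $n\geq 1$ there is a finite independence set $I_n=\{t_1^{(n)}<\cdots<t_n^{(n)}\}$ for $\{U,V\}$. Specializing the defining property to the constant function $S\equiv 1$ yields $z_n\in G$ with $f^{t_i^{(n)}}(z_n)\in U$ for all $i=1,\dotsc,n$, so the forward orbit of $z_n$ enters $U$ at least $n$ times. If for some $n$ the orbit of $z_n$ enters $\overline{U}$ infinitely often, then $\omega_f(z_n)\cap\overline{U}$ is a non-empty subset of $\omega(f)\cap\overline{U}$ and we are done.

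Otherwise each $z_n$ has only finitely (but at least $n$) many iterates in $\overline{U}$. I then form the empirical probability measures $\mu_n=(M_n+1)^{-1}\sum_{k=0}^{M_n}\delta_{f^k(z_n)}$ with $M_n=\max I_n$, so that $\mu_n(\overline{U})\geq n/(M_n+1)$. A Krylov–Bogolyubov weak-$*$ subsequential limit $\mu$ is $f$-invariant, and provided $\mu(\overline{U})>0$, Poincar\'e recurrence yields that $\mu$-a.e.\ point is recurrent, so that $\supp(\mu)\cap\overline{U}$ is a non-empty subset of $\omega(f)$, as required.

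The main obstacle is ensuring $\mu(\overline{U})>0$: the density $n/(M_n+1)$ could a priori tend to zero if the independence sets $I_n$ are very sparse. To overcome this I would exploit the full IN-pair structure rather than only the constant pattern $S\equiv 1$: varying $S$ over all $2^n$ subsets of $I_n$ produces a family of orbits realizing every $\{U,V\}$-pattern along $I_n$, and a pigeonhole/Ramsey argument should allow one to refine to an independence subset of positive lower density in its own range. As a fallback, one can bypass measures and work with the closed $f$-invariant set $\overline{\bigcup_n \orb_f(z_n)}$: by Zorn's lemma it contains a minimal $f$-invariant subset $M\subseteq\omega(f)$, and the classification of graph-map $\omega$-limit sets from Section~3 (solenoid, basic, or circumferential type) is then used to show that $M$ must intersect $\overline{U}$, by ruling out each of the alternative locations on the graph where the accumulation of the orbits $\orb_f(z_n)$ could escape $\overline{U}$.
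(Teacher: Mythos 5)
Your reduction to ``every neighbourhood $\overline U$ of $x$ meets $\omega(f)$'' (using that $\omega(f)$ is closed for graph maps, which does follow from the Hausdorff--compactness of the family of $\omega$-limit sets quoted from \cite{MS2007}) is a reasonable frame, and your Case A is fine. But Case B, the only hard case, is not closed, and neither of your proposed repairs can work. The Ramsey/pigeonhole refinement of the finite independence sets to ones of positive lower density in their own range is precisely what would upgrade $\langle x,y\rangle$ to an IE-pair; since a non-diagonal IE-pair forces positive topological entropy while the whole point of this circle of results is that zero-entropy graph maps can have non-diagonal IN-pairs, such a refinement is provably impossible in general --- the independence sets of an IN-pair that is not an IE-pair must be sparse, so the limit measure $\mu$ may well give $\overline U$ measure zero and Poincar\'e recurrence yields nothing. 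The fallback via a minimal set $M\subseteq\overline{\bigcup_n\orb_f(z_n)}$ is also unsubstantiated: each orbit $\orb_f(z_n)$ may visit $U$ exactly $n$ times and then converge to an attracting fixed point elsewhere on the graph, in which case every minimal subset of that closure avoids $\overline U$; the sentence ``ruling out each of the alternative locations'' is where the entire difficulty sits, and no argument is given.

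The missing ingredient is the one genuinely graph-specific fact. The proof of \cite[Lemma 5.9]{LOYZ17} (the present paper only cites the statement) runs through the non-wandering set: a two-element independence set $\{t_1<t_2\}$ with the constant pattern $S\equiv 1$ gives a point $w$ with $f^{t_1}(w),f^{t_2}(w)\in U$, hence $U\cap f^{-(t_2-t_1)}(U)\neq\emptyset$ and $x\in\Omega(f)$; since $IN(X,f)=IN(X,f^k)$ (Lemma~\ref{lem:IT-IN-fk}) the same argument applies to every power, so $x\in\bigcap_{k\geq 1}\Omega(f^k)$; and for graph maps one has the identity $\omega(f)=\bigcap_{k\geq 1}\Omega(f^k)$ (a theorem of Mai and Sun), which finishes the proof. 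Your attempt contains the germ of the first step (repeated visits of a single orbit to $U$) but never invokes, nor substitutes for, this identity, and without it the passage from ``many transient visits to $U$'' to ``$\overline U$ meets $\omega(f)$'' is a genuine gap.
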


\begin{prop}[{\cite[Proposition 2.10]{LOZ}}]\label{prop:LOZ}
	If $f\colon G\to G$ is a graph map with zero topological entropy,
	then there exists a continuous map $g$ acting on a graph $Y$ without scrambled
	pairs and a factor map $\pi\colon (G,f)\to (Y,g)$ such that the pair $\langle p,q\rangle$ is asymptotic whenever
	$p,q\in \pi^{-1}(y)$ for some $y\in Y$. 	
\end{prop}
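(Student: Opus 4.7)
The plan is to construct an $f\times f$-invariant closed equivalence relation $R$ on $G$ whose non-trivial classes consist of mutually asymptotic points, and then verify that the resulting quotient $(Y,g):=(G/R,\overline f)$ is a graph map without scrambled pairs and that the canonical factor map $\pi\colon(G,f)\to(Y,g)$ has the asserted fiber property. The intuition is supplied by Theorem~\ref{thm:chaos-NS-pair} and Lemma~\ref{lem:non-seq-eq}: in the zero-entropy regime, scrambled pairs are forced to come from the connected components of the solenoidal sets $P(z)=\bigcap_{n\ge 1}X_n$, where $(X_n)$ is a nested sequence of cycles of graphs provided by Lemma~\ref{lem:solenoid} for a solenoidal $\omega_f(z)$. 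So the natural choice is to declare $x\sim y$ whenever $x$ and $y$ lie in a common connected component of some such $P(z)$, and then let $R$ be the smallest closed $f\times f$-invariant equivalence relation containing these pairs. The relation $R$ is $f\times f$-invariant automatically because $f$ permutes the cycle of graphs $X_n$ cyclically and maps connected components to connected components; by Lemma~\ref{lem:solenoid cup}, distinct maximal solenoidal $\omega$-limit sets yield disjoint families of $R$-classes.

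Next I would verify the asymptotic property of the classes. For $x,y$ in a common connected component of $P(z)$, Lemma~\ref{lem:non-seq-eq} provides nested periodic subgraphs $J_1\supset J_2\supset\cdots$ with periods $m_n\to\infty$ containing both points. Writing $k=qm_n+r$ with $0\le r<m_n$, one has $f^k(x),f^k(y)\in f^r(J_n)$, whence $d(f^k(x),f^k(y))\le\max_{0\le r<m_n}\diam(f^r(J_n))$. Using that the orbit pieces $f^r(J_n)$ are pairwise disjoint so $\sum_r\diam(f^r(J_n))\le\diam(G)$, and reducing to the interval-map setting via Lemma~\ref{lem:NS-pair-preimage} (so that on a branch-point-free piece $f^{m_n}|_{f^r(J_n)}$ is an interval map, whose solenoidal components must shrink in diameter), one obtains $d(f^k(x),f^k(y))\to 0$. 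A uniformity/limiting argument then extends this to all pairs in $R$.

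The main obstacle is showing that $Y=G/R$ is actually a graph, not just a more complicated continuum. I would approach this via an inverse-limit presentation $Y=\varprojlim Y_n$, where $Y_n$ is obtained from $G$ by collapsing each connected component of $X_n$ (for each chosen maximal solenoid) to a point. Since each $X_n$ is a finite disjoint union of subgraphs, $Y_n$ is a graph obtained from $G$ by a finite sequence of subgraph collapses, and each $Y_n$ carries a natural continuous map $g_n$ with factor bonding maps $Y_{n+1}\to Y_n$. Geometric stabilization, namely that the diameters of the to-be-collapsed components at level $n$ tend to zero (Step~2) and that collapsing happens only in small neighbourhoods of the $P(z)$'s, shows that $Y$ differs from $G$ only by collapsing a family of closed subcontinua each eventually contained in a short branch-point-free arc. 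Therefore $Y$ is a graph and the induced map $g$ is continuous. Handling several maximal solenoids simultaneously and keeping the branch-point structure under control is the technical heart of the argument and requires careful bookkeeping using the structural results of Section~3.

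Finally, I would show that $(Y,g)$ has no scrambled pairs. Since $g$ is a factor of $f$, $(Y,g)$ has zero topological entropy. If it did have a scrambled pair, Theorem~\ref{thm:chaos-NS-pair} applied to $(Y,g)$ would yield an NS-pair $\langle u,v\rangle$ in $Y$; Lemma~\ref{lem:non-seq-eq2} then gives $w\in Y$ with $\omega_g(w)$ a solenoid containing both $u$ and $v$. Lifting $w$ to some $\widetilde w\in G$ and using that preimages under $\pi$ of cycles of graphs of $g$ are (unions of) cycles of graphs of $f$, one produces $\widetilde u,\widetilde v\in G$ with $\pi(\widetilde u)=u$, $\pi(\widetilde v)=v$ lying in a common connected component of $P(\widetilde w)$ in $G$; thus $\widetilde u$ and $\widetilde v$ are $R$-equivalent, so $u=\pi(\widetilde u)=\pi(\widetilde v)=v$, contradicting $u\neq v$. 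This concludes the proof, modulo the technical bookkeeping in Step~3.
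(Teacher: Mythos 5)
The paper itself gives no proof of this proposition---it is imported from \cite{LOZ}---and the only in-paper hint about its proof is the remark inside the proof of Lemma~\ref{lem:IN-solenoid} that the non-degenerate fibers of $\pi$ are obtained by collapsing the non-degenerate connected components of solenoids. So your choice of decomposition is the intended one. However, two of your steps rest on a claim that is false precisely in every case where the proposition has content. In Step~2 you bound $d(f^k(x),f^k(y))$ by $\max_{0\le r<m_n}\diam(f^r(J_n))$ and assert that this maximum tends to $0$ as $n\to\infty$. It does not: if the connected component $C$ of $P(z)$ containing $x,y$ is non-degenerate, then for every $n$ one has $\diam(f^r(J_n))\ge\diam(C)>0$ for the index $r$ with $C\subset f^r(J_n)$; and if all these maxima did tend to $0$, then $P(z)$ would be totally disconnected and there would be nothing to collapse. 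Disjointness of the $f^r(J_n)$ only controls the \emph{sum} of their lengths, hence shows that most of them are small, not all, and your uniform-in-$k$ bound is therefore useless. The correct argument is different: $f^k(x),f^k(y)\in f^k(C)$; the sets $f^k(C)$, $k\ge 0$, are pairwise disjoint (for $k\ne j$ choose $n$ with $m_n>|k-j|$, so $f^k(C)$ and $f^j(C)$ lie in distinct sets of the orbit of $J_n$), hence they lie in pairwise distinct connected components of $P(z)$; and for each $\eps>0$ only finitely many pairwise disjoint connected subsets of $G$ can have diameter at least $\eps$. Therefore $\diam(f^k(C))\to 0$, which gives asymptoticity.

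The same false claim undermines Step~3 (``the diameters of the to-be-collapsed components at level $n$ tend to zero''), and the inverse-limit presentation is problematic in any case, since inverse limits of graphs need not be graphs. The clean route is to verify that the partition of $G$ into the non-degenerate components of the sets $P(z)$ (taken over the at most countably many maximal solenoidal $\omega$-limit sets, which are pairwise disjoint by Lemma~\ref{lem:solenoid cup}) together with singletons is an upper semi-continuous monotone decomposition, and then invoke the standard fact that a monotone image of a topological graph is a topological graph. Relatedly, defining $R$ as the \emph{smallest closed} invariant equivalence relation containing your pairs risks enlarging the classes beyond single components, which would destroy both the fiber property and your Step~4; you should instead prove directly that the decomposition just described is already closed. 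Step~4 itself (deriving a contradiction from a scrambled pair downstairs via Theorem~\ref{thm:chaos-NS-pair} and lifting nested periodic subgraphs through the monotone map $\pi$) is sound and not circular within this paper, since Theorem~\ref{thm:chaos-NS-pair} is proved without appeal to Proposition~\ref{prop:LOZ}.
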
 

\begin{lem}\label{lem:IN-solenoid}
Let $f\colon G\to G$ be a graph map with zero topological entropy. If $\langle x,y\rangle$ is a non-diagonal IN-pair, then $\langle x,y\rangle$ is asymptotic and $\omega_f(x)$ is a solenoid. 
\end{lem}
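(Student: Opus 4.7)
The proof will split naturally into two halves: establishing first the asymptotic property of the pair, and then the solenoidal structure of $\omega_f(x)$.

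For the asymptotic property I would apply Proposition~\ref{prop:LOZ} to obtain a factor map $\pi\colon(G,f)\to(Y,g)$ such that $g$ has no scrambled pair and every $\pi$-fiber consists of points that are pairwise asymptotic under $f$. By Theorem~\ref{thm:graph-tame-null-eq} the map $g$ is null, so Theorem~\ref{thm:null-IN}(1) gives $IN(Y,g)=\Delta_Y$. Lemma~\ref{INIT-conjugate} shows $\langle \pi(x),\pi(y)\rangle$ is an IN-pair of $g$, hence $\pi(x)=\pi(y)$. Thus $x$ and $y$ share a fiber of $\pi$, so they are asymptotic; in particular $\omega_f(x)=\omega_f(y)$.

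For the solenoidal conclusion, Lemma~\ref{lem:IN-omega} gives $x\in\omega(f)$, so $x\in\omega_f(w)$ for some $w\in G$. The zero-entropy hypothesis combined with Theorem~\ref{thm:basic-set-and-rotation}(1) excludes basic sets, so by Lemma~\ref{lem:non-solenoid} any infinite $\omega$-limit set is either circumferential or a solenoid. First I would rule out the possibility that $\omega_f(x)$ is finite. If it were a periodic orbit, Lemma~\ref{lem:IT-IN-fk} would let me replace $f$ by an iterate $f^k$ and assume $\omega_f(x)=\{p\}$ with $p$ fixed. Since $p\in\omega_f(x)\subset\omega_f(w)$ is a periodic point while both solenoids (Lemma~\ref{lem:solenoid}(3)) and circumferential sets (by definition of the underlying cycle) contain no periodic points, $\omega_f(w)$ must be finite; then $x\in\omega_f(w)$ is periodic and $\omega_f(x)=\{p\}$ forces $x=p$. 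The same argument applied to $y$, together with the asymptoticity inherited by $f^k$ to identify the common fixed point, yields $y=p$, contradicting $x\ne y$.

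It remains to exclude the circumferential case. Suppose $\omega_f(x)=E$ is circumferential; by Lemma~\ref{lem:non-solenoid} and Theorem~\ref{thm:basic-set-and-rotation}(2) there is an almost conjugacy $\varphi\colon X\to Y'$ between $f|_E$ and an irrational rotation $g'$, with $(E,f|_E)$ minimal. Analogously to the finite case, examining the possibilities for $\omega_f(w)$ one shows $x,y\in E$: a circumferential $\omega_f(w)$ containing the minimal set $E$ equals $E$, and a solenoidal $\omega_f(w)$ is incompatible with containing the non-degenerate connected subgraph $E$ in view of the thinning structure of Lemma~\ref{lem:solenoid}(2). Projecting the IN-pair via $\varphi$ into the null (equicontinuous) system $(Y',g')$ forces $\varphi(x)=\varphi(y)$. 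Because $\varphi^{-1}(\varphi(x))$ is connected and meets $E$ precisely in its boundary, the assumption $x\ne y$ makes this fiber a non-degenerate continuum whose boundary contains $x$ and $y$ as distinct points. Using the rigidity of almost 1-1 extensions of an irrational rotation, distinct boundary points of the same fiber cannot be asymptotic under $f$, contradicting the asymptoticity established in the first part and completing the proof. The main obstacle is precisely this last incompatibility step: one must carefully exploit that the base system is an isometry, so that the fiber structure persists along the orbit and prevents the two distinct boundary points from collapsing together under iteration.
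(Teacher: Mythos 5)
The first half of your argument (asymptoticity via Proposition~\ref{prop:LOZ}, Theorem~\ref{thm:graph-tame-null-eq}, Theorem~\ref{thm:null-IN} and Lemma~\ref{INIT-conjugate}) is exactly the paper's argument. For the solenoid part the paper does something much shorter: it looks inside the construction of $\pi$ in Proposition~\ref{prop:LOZ}, where the non-degenerate fibers are precisely the collapsed non-degenerate connected components of solenoids; hence $\pi(x)=\pi(y)$ together with $x\neq y$ already forces $\omega_f(x)=\omega_f(y)$ to be a solenoid, with no case analysis of Blokh's classification.

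Your alternative route through the classification of $\omega$-limit sets has a genuine gap in the circumferential case. The decisive claim, that two distinct boundary points of a common fiber of the almost conjugacy $\varphi$ onto an irrational rotation cannot be asymptotic under $f$, is false: the Denjoy circle homeomorphism is exactly this situation, and there the two endpoints of a wandering gap \emph{are} asymptotic, because the iterated gaps are pairwise disjoint and hence their lengths tend to $0$. So the asymptoticity you established in the first half produces no contradiction with the circumferential picture; the obstruction must come from the IN-pair property itself (a Denjoy-like system has no scrambled pairs, hence is null by Theorem~\ref{thm:graph-tame-null-eq}, hence has no non-diagonal IN-pairs). But to use that you would need to know that the IN-pair $\langle x,y\rangle$ of $(G,f)$ is an IN-pair of the subsystem $(X,f|_X)$ (or of $(E,f|_E)$) on which $\varphi$ is defined; Lemma~\ref{INIT-conjugate} only transfers IN-pairs along factor maps of the whole system, and restriction of IN-pairs to invariant subsets is not automatic, since the witnesses for independence need not lie in $X$. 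The same issue already affects your intermediate step ``projecting the IN-pair via $\varphi$ forces $\varphi(x)=\varphi(y)$'', because $\varphi$ is not defined on all of $G$. The finite (periodic-orbit) case you handle correctly, but as it stands the circumferential case is not excluded, so the proof is incomplete; either supply a restriction argument for IN-pairs to the cycle $X$ and then invoke nullness of that subsystem, or follow the paper and use the structure of the fibers of the factor map from Proposition~\ref{prop:LOZ}.
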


\begin{proof}
Let $\pi\colon (G,f)\to (Y,g)$ be the factor map in Proposition~\ref{prop:LOZ}.
By Lemma~\ref{INIT-conjugate} $\langle \pi(x),\pi(y)\rangle$ is an IN-pair for $(Y,g)$.

As $(Y,g)$ has no scramble pairs, results of \cite[Theorem~1.5]{LOYZ17} imply that $\pi(x)=\pi(y)$, therefore
$\langle x,y\rangle$ is asymptotic. 
At this point, we have to look deeper into the proof of Proposition~\ref{prop:LOZ}. The non-degenerate fibers of $\pi$
are obtained by collapsing non-degenerate connected components of solenoids (there are at most countably many such sets). In particular it means that 
$\omega_f(x)=\omega_f(y)$ 
is a solenoid, completing the proof.
\end{proof}

Now we are ready to prove Theorem \ref{thm:IN=IT=NS}.
\begin{proof}[Proof of Theorem \ref{thm:IN=IT=NS}]
First note, that if $\langle x,y\rangle$ is an IT-pair then it is also an IN-pair by the definition, so $\eqref{Graph:IT}\Rightarrow\eqref{Graph:IN}$.

Next, assume that $\langle x,y\rangle$ is an IN-pair. By Lemma~\ref{lem:IN-omega} we have $x,y\in \omega(f)$.
By Lemma~\ref{lem:IN-solenoid} the $\omega$-limit set $\omega_f(x)$ is a solenoid and therefore so is the maximal $\omega$-limit set $\omega_f(z)$ containing $x$, so directly from the definition we
obtain a sequence of periodic subgraphs $J_n$
such that $x\in J_n$ for every $n\in\bbn$. But since $x,y$ are asymptotic, we have $\omega_f(y)\subset \omega_f(z)$
and therefore $y\in \omega_f(z)\subset J_n$.
This shows that $\langle x,y\rangle$ is an NS-pair, that is $\eqref{Graph:IN}\Rightarrow\eqref{Graph:NS}$.

For the remaining implication $\eqref{Graph:NS}\Rightarrow\eqref{Graph:IT}$, assume that $\langle x,y\rangle$ is an NS-pair. By the definition we obtain periodic subgraphs $(J_n)_{n=1}^{\infty}$ 
and sequence of associated periods $(m_n)_{n=1}^{\infty}$, such that $x,y\in J_n$ for every $n\in\bbn$.
Choose sufficiently large integer $k\in\bbn$ such that for some $0\leq i<m_k-1$ the map $f^{m_k}|_{f^i(J_k)}$ is an interval map.
By Lemma \ref{lem:NS-pair-preimage}, there exist $x^*,y^* \in f^i(J_k)$ such that $\langle x^*,y^*\rangle$ is an NS-pair for $f^{m_k}|_{f^i(J_k)}$, $f^{m_k-i}(x^*)=x$ and  $f^{m_k-i}(y^*)=y$.
By Theorem~\ref{thm:interval-IN-IT},
$\langle x^*,y^*\rangle$ is an IT-pair for $f^{m_k}|_{f^i(J_k)}$.
By Lemma~\ref{lem:IT-IN-fk}, $\langle x^*,y^*\rangle$ is an IT-pair for $f$, and $\langle x,y\rangle$ is also an IT-pair by Lemma~\ref{lem:IT-IN-fk}, as $f^{m_k-i}\times f^{m_k-i}\langle x^*,y^* \rangle=\langle x,y\rangle$.
\end{proof}

\section*{Acknowledgments}

J. Li and X. Liang were supported in part by NSF of China (11771264, 
11871188) and NSF of Guangdong (2018B030306024).
P. Oprocha was supported in part by National Science Center, Poland, grant no. 2019/35/B/ST1/02239.
The authors would like to thank the anonymous referees for their help comments.

\end{document}